\theoremstyle{plain}
\newtheorem{theorem}{Theorem}[section]
\newtheorem{corollary}[theorem]{Corollary}
\newtheorem{lemma}[theorem]{Lemma}
\newtheorem{proposition}[theorem]{Proposition}
\theoremstyle{definition}
\newcommand {\R} {\ensuremath{\mathbb{R}}}
\newcommand {\ZZ} {\ensuremath{\mathbb{Z}}}
\newcommand {\N} {\ensuremath{\mathbb{N}}}
\newcommand{\Capa}{\operatorname{Cap}}
\newcommand{\Var}{\operatorname{Var}}
\newcommand{\bbZ}{\mathbb{Z}}
\newcommand{\bbP}{\mathbb{P}}
\newcommand{\bbE}{\mathbb{E}}
\newcommand{\bbjedan}{\mathbbm{1}}
\newcommand{\calC}{\mathcal{C}}
\newcommand{\calJ}{\mathcal{J}}
\newcommand{\calE}{\mathcal{E}}
\newcommand{\calF}{\mathcal{F}}
\newcommand{\calP}{\mathcal{P}}
\newcommand{\calR}{\mathcal{R}}
\newcommand{\calN}{\mathcal{N}}
\newcommand{\aps}[1]{\vert #1 \vert}
\newcommand{\floor}[1]{\lfloor #1 \rfloor}
\newcommand{\obl}[1]{\bigg( #1 \bigg)}
\newcommand{\ugl}[1]{\bigg[ #1 \bigg]}
\newcommand{\vit}[1]{\bigg\{ #1 \bigg\}}
\numberwithin{equation}{section}
\title[CLT for the capacity of the range of stable random walks]{CLT for the capacity of the range\\ of stable random walks}
\author[W.\ Cygan]{Wojciech Cygan}
\address[Wojciech Cygan]{Institut f\"{u}r Mathematische Stochastik\\Technische Universit\"{a}t Dresden\\Dresden\\Germany
\& 
Instytut Matematyczny\\Uniwersytet Wroc\l{}awski\\ Wroc\l{}aw\\ Poland}
\email{wojciech.cygan@uwr.edu.pl}
\author[N.\ Sandri\'{c}]{Nikola Sandri\'{c}}
\address[Nikola\ Sandri\'{c}]{Department of Mathematics\\University of Zagreb\\ Zagreb\\Croatia}
\email{nsandric@math.hr}
\author[S.\ \v{S}ebek]{Stjepan\ \v{S}ebek}
\address[Stjepan\ \v{S}ebek]{Department of Applied Mathematics\\
	Faculty of Electrical Engineering and Computing\\
	University of Zagreb\\ 
 Zagreb\\ 
	Croatia}
\email{stjepan.sebek@fer.hr}
\subjclass[2010]{60F05, 60G50, 60G52}
\keywords{capacity, central limit theorem, strong transience, the range of a random walk}
\begin{document}
\allowdisplaybreaks[4]

\begin{abstract}
    In this article, we establish a central limit theorem for the capacity of the range process for a class of $d$-dimensional symmetric $\alpha$-stable random walks with the index satisfying $d /\alpha> 5 /2$. Our approach is based on controlling the limit behavior of the variance of the capacity of the range process which then allows us to apply the Lindeberg-Feller theorem.
       
\end{abstract}

\maketitle

%
%
%
%

\section{Introduction}

Let $(\Omega,\calF,\bbP)$ be a probability space, and let $\{X_i\}_{i\in\N}$ be a sequence of i.i.d. $\bbZ^d$-valued random variables defined on $(\Omega,\calF,\bbP)$, where $d\ge1$  and $\bbZ^d$ stands for the $d$-dimensional integer lattice. For $x\in\ZZ^d$ define $S_0=x$ and $S_n=S_{n-1}+X_n$, $n\ge1$. The stochastic process $\{S_n\}_{n\ge0}$ is called a  $\bbZ^d$-valued random walk starting from $x$.

Throughout the article we will often rely on the Markovian nature of $\{S_n\}_{n\ge0}$, therefore we need to allow arbitrary initial conditions of the underlying probability measure. For this purpose we redefine the probability space in the following way.  Put $\bar{\Omega}=\bbZ^d\times\Omega,$ $\bar{\calF}=\calP(\bbZ^d)\otimes\calF$, and $\bbP_x=\delta_x\times\bbP$ for $x\in\ZZ^d.$ 
A random variable $X$ on $(\Omega,\calF,\bbP)$ is extended automatically to   
$(\bar\Omega,\bar\calF,\{\bbP_x\}_{x\in\ZZ^d})$ by the rule $X(x,\omega)=X(\omega)$ for $x\in\ZZ^d$ and $\omega\in\Omega$. Further, define $S_0:\bar{\Omega}\to\ZZ^d$ by $S_0(x,\omega)=x$ for $x\in\ZZ^d$ and $\omega\in\Omega$. Clearly, $\bbP_x(S_0=x)=1$, and  for each  $x\in\ZZ^d$ the process $\{S_n\}_{n\ge0}$ is  a  $\bbZ^d$-valued random walk  on  $(\bar\Omega,\bar\calF,\bbP_x)$ starting from $x$. Also, it is a (strong) Markov process (with respect to the corresponding natural filtration). Observe that the corresponding transition probabilities are given by 
$$p_n(x,y)=\bbP_x(S_n=y)=\bbP_0(S_n=y-x),\qquad n\ge0,\ x,y\in\ZZ^d.$$ 
From the above relation  we immediately see that there are functions $\{p_n\}_{n\ge0}$ such that $p_n(y-x)=p_n(x,y)=p_n(0,y-x)$, $n\ge0$, $x,y\in\ZZ^d$.
Also, for notational simplicity we write $(\Omega,\calF,\{\bbP_x\}_{x\in\ZZ^d})$ instead of $(\bar\Omega,\bar\calF,\{\bbP_x\}_{x\in\ZZ^d})$, and when $x=0$ we suppress the index $0$ and write $\bbP$ instead of $\bbP_0$. We denote by 
\begin{align*}
G(x,y)=\sum_{n\ge0}p_n(y-x),\qquad x,y\in\ZZ^d
\end{align*}
the Green function of $\{S_n\}_{n\ge0}$. Due to the spatial homogeneity of $\{S_n\}_{n\ge0}$, we sometimes write $G(y-x)$ instead of $G(x,y)$. 
Recall  that $\{S_n\}_{n\ge0}$ is called transient if $G(0)<\infty$; otherwise it is called recurrent.

The main aim of this article is to establish a central limit theorem for the capacity of the range process of  $\{S_n\}_{n\ge0}$.
Recall that the range process $\{\calR_n\}_{n\ge0}$ is defined as the random set
\begin{align*}
\calR_n = \{S_0,\dots,S_n\},\qquad n\ge 0.
\end{align*}
For $1\le m\le n$ we use notation $\calR[m,n]=\{S_m,\dots,S_n\}.$
The capacity of a set $A\subseteq\ZZ^d$ (with respect to any transient random walk $\{S_n\}_{n\ge0}$) is defined as 
\begin{align*}
{\rm Cap}(A)=\sum_{x\in A}\bbP_x(T^+_A=\infty).
\end{align*}
Here, $T^+_A$ denotes the first return time of $\{S_n\}_{n\ge0}$ to the set $A$, that is, 
\begin{align*}
T^+_A=\inf\{n\ge1:S_n\in A\}.
\end{align*}
 Also, when $A=\{x\}$, $x\in\ZZ^d$, we write $T^+_x$ instead of $T^+_{\{x\}}$. We are interested in the long-time behaviour of the process  $\{\calC_n\}_{n\ge0}$ defined as 
\begin{align*}
\calC_n={\rm Cap}(\calR_n ).
\end{align*}
Before stating the main result, we  introduce and discuss the  assumptions which we impose on the random walk  $\{S_n\}_{n\ge0}$.

\begin{itemize}
	\item [(\textbf{A1})] $\{S_n\}_{n\ge0}$ is   aperiodic, that is, the smallest additive subgroup generated by the set ${\rm supp}\, p_1=\{x\in\mathbb{Z}^d:\, p_1(x)>0\}$ is equal to $\ZZ^d$.
	
	\medskip
		\item [(\textbf{A2})] $\{S_n\}_{n\ge0}$ is  symmetric and  strongly transient.
	
	\medskip
	
	\item [(\textbf{A3})] The step $X_1$ of the random walk $\{S_n\}_{n\ge0}$ belongs to the domain of  attraction of a  non-degenerate $\alpha$-stable random law with $0<\alpha\le2$, meaning that there exists a regularly varying function $b(x)$ with index $1/\alpha$ such that 
	$$\frac{S_n}{b(n)}\xrightarrow[n\nearrow\infty]{\text{(d)}}U_\alpha,$$ 
where $U_\alpha$ is an  $\alpha$-stable random variable on $\R^d$ and $\ \xrightarrow[]{\text{(d)}}$ stands for the convergence in distribution.

\medskip

\item [(\textbf{A4})] $\{S_n\}_{n\ge0}$ admits one-step loops, that is, $p=p_1(0)>0$.

\end{itemize}

Let us  remark that assumption (\textbf{A1}) is not restrictive in any sense. Namely, if $\{S_n\}_{n\ge0}$ is not aperiodic, we can then perform our analysis (and obtain the same results) on the smallest additive subgroup of $\ZZ^d$ generated by ${\rm supp}\,p_1$ (see \cite[pp 20]{Spitzer}). 

To discuss (\textbf{A2}) and (\textbf{A3}) we recall that a transient random walk $\{S_n\}_{n\ge0}$ is called strongly transient if $\sum_{n\ge1}n\,p_n(0)<\infty$; otherwise it is called weakly transient.   
It is known that every transient random walk is either strongly or weakly transient (see \cite{Sato}). Under (\textbf{A3}),   $\{S_n\}_{n\ge0}$ is transient if $d/\alpha>1$ and strongly transient if $d/\alpha>2$ (see \cite[Theorem 3.4]{Sato}, cf.\ also  \cite[Theorem 7]{Takeuchi}).  
The notion of strong transience was first introduced  in \cite{Port} for Markov chains and was later used in \cite{Jain-Orey} in the context of the limit behavior of the range of random walks.
Actually, in \cite{Jain-Orey} a slightly different definition of 
 strong (weak) transience has been used: a transient random walk $\{S_n\}_{n\ge0}$ is called strongly  transient if $\sum_{n\ge1}n\,\bbP(T^+_{0}=n)<\infty$; otherwise it is called weakly transient. For reader's convenience we show that these two definitions are equivalent. Indeed, starting from the following classical identity (see \cite{Spitzer})
\begin{align*}
p_k(0)= \sum_{j=1}^k \bbP(T^+_{0}=j)p_{k-j}(0),\qquad k\ge1,
\end{align*}
we easily obtain that
\begin{align*}
\sum_{n=1}^\infty n\,p_n(0) \big( 1-\sum_{j=1}^\infty \bbP(T^+_{0}=j)\big) = G(0)\sum_{n=1}^\infty n\,\bbP(T^+_{0}=n),
\end{align*}
and whence both series must converge simultaneously. It is a well-known fact that the condition $G(0)<\infty$ forces $\bbP(T^+_0<\infty)=\sum_{n=1}^\infty \bbP(T^+_{0}=n) <1$.

We remark that the strong transience assumption is very natural in this context. Namely, it ensures that the range process $\{\calR_n\}_{n\ge0}$ grows fast enough which allows us to conclude that the    
 limiting distribution in Theorem \ref{CLT} below is not degenerated, in other words, the constant $\sigma_d$ does not vanish.

Validity of condition (\textbf{A3}) can be checked with the aid of the regular variation of the tails of the step $X_1$. In the one-dimensional case we refer to \cite[Theorem 8.3.1]{BGT_book}, and in the multidimensional case analogous results can be found in \cite[Section 5.4.2]{Resnick}, \cite[Theorem 4.2]{Rvaceva} and the references therein. Conditions for stability of a random vector in terms of its one-dimensional projections can be found in \cite{Gupta}.

Finally,  assumption  (\textbf{A4}) is of technical nature only. By using a random time-change argument and loop decomposition technique, it allows us to conclude that the limit in \eqref{CLT} exists and it is not degenerated. 
One could ask if our main result holds without (\textbf{A4}) but for the time being we cannot address this demanding question. Let us remark, however, that in the case of a simple random walk (for which (\textbf{A4}) is obviously violated) in \cite{Asselah_Zd} the authors established central limit theorem with normal distribution in the limit and in their proof they employ an analogous idea (so called no double backtracks at even times) to obtain non-degeneracy of the limit law. Unfortunately, this idea is designed for simple random walks and it is not clear whether it can be modified for more general walks (for instance for stable random walks).

A natural way to construct a random walk that satisfies our assumptions (\textbf{A1})-(\textbf{A4}) is to employ a recently introduced method of discrete subordination (see \cite{BSC}). To be more precise, let us consider the simple symmetric  random walk in $\mathbb{Z}^d$ that we denote by $\{Z_n\}_{n\geq 0}$. Further, let $\{\eta_n\}_{n\geq 0}$ be an increasing random walk in $\mathbb{Z}$ starting from $0$ that is independent of $\{Z_n\}_{n\geq 0}$, and which is uniquely determined by the following relation
\begin{align*}
\mathbb{E}[e^{-\lambda \eta_1}]=1-\psi (1-e^{-\lambda}).
\end{align*}
Here $\psi(\lambda)$ is a Bernstein function (see \cite{BFs_book}) such that $\psi (0)=0$ and $\psi (1)=1$. We then define the subordinate random walk as $S_n=Z_{\eta_n} $, $n\geq 0$. Such a random walk is aperiodic and symmetric. Moreover, it satisfies (\textbf{A3}) with index $0<\alpha\le2$ if and only if the function $\psi(\lambda)$ is regularly varying at zero with index $\alpha/2$ (see \cite{Mimica} and \cite{BCT}). For instance, one can take $\psi(\lambda)=\lambda ^{\alpha /2}$.
More general examples of random walks satisfying assumption (\textbf{A3}) may be found in \cite{williamson}.

We now state the main result of the article.

\begin{theorem}\label{CLT} Assume (\textbf{A1})-(\textbf{A4}) and $d /\alpha > 5 /2$. Then, there is a constant $\sigma_d>0$ such that
	\begin{equation}\label{eq:CLT}
	\frac{\calC_n - \bbE[\calC_n]}{\sqrt{n}}\xrightarrow[n\nearrow\infty]{\rm{(d)}}  \sigma_d\, \calN(0, 1),
	\end{equation}
	where $\calN(0, 1)$ stands for the standard normal distribution.
\end{theorem}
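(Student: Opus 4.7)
My strategy would follow the authors' stated plan: reduce \eqref{eq:CLT} to a sum of nearly independent increments and apply the Lindeberg--Feller CLT, with identification of the limiting variance as the main analytic input. I would partition $\{1,\dots,n\}$ into $L=L(n)\to\infty$ blocks $I_i=((i-1)m,im]$ of common size $m=n/L$, and introduce the block capacities $Y_i:=\Capa(\calR[(i-1)m+1,im])$. By the strong Markov property and spatial homogeneity, each $Y_i$ is equal in distribution to $\calC_m=\Capa(\calR_m)$, and the $Y_i$'s are conditionally independent given the skeleton $(S_{jm})_j$, making them the natural candidates for the Lindeberg--Feller summands.

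Using the identity $\Capa(A\cup B)=\Capa(A)+\Capa(B)-\chi(A,B)$, in which $\chi(A,B)$ encodes the mutual overlap of $A$ and $B$ through first-hitting probabilities, one inductively telescopes
\[
\calC_n \;=\; \sum_{i=1}^L Y_i \;-\; \sum_{i<j}\chi\bigl(\calR[I_i],\calR[I_j]\bigr) \;+\;(\text{higher-order overlaps}).
\]
The quantitative heart of the argument is to show that these overlap terms contribute only $o(n)$ to the variance. This is where strong transience (\textbf{A2}) and the dimension condition $d>5\alpha/2$ enter: intersections between well-separated pieces of the range are rare, and the threshold $d>5\alpha/2$ (stronger than either $d>\alpha$ for transience or $d>2\alpha$ for strong transience) is precisely what is needed to bound second moments of pairwise overlaps via Green-function estimates and the regularly varying scale $b(n)$ from (\textbf{A3}). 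Combined with $\bbE[Y_1]\asymp m$ and $\Var(Y_1)=O(m)$, this would yield $\Var(\calC_n)\sim \sigma_d^{\,2}\,n$ for some $\sigma_d\geq 0$.

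With this variance asymptotics in hand, the Lindeberg condition for the triangular array $(Y_i-\bbE Y_i)/\sqrt n$ would follow from a uniform-in-$m$ moment bound of the form $\bbE[(Y_1-\bbE Y_1)^{2+\delta}]=O(m^{1+\delta/2})$, which is accessible from the trivial inequality $\Capa(A)\leq |A|$ combined with standard moment estimates for $|\calR_m|$ of random walks in the domain of attraction of an $\alpha$-stable law; the block Markov structure then reduces the CLT in \eqref{eq:CLT} to verifying $\sigma_d>0$.

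The decisive obstacle, as the authors emphasize, is this non-degeneracy. I would exploit (\textbf{A4}) via a loop decomposition: since $p_1(0)=p>0$, one can write $S_n=\tilde S_{N_n}$, where $\{\tilde S_n\}$ is a loop-free walk and $N_n\sim\mathrm{Binomial}(n,1-p)$ is an independent step counter. Because the range is unaffected by loops, $\calC_n=\Capa(\tilde\calR_{N_n})$, so
\[
\Var(\calC_n)\;\geq\;\Var\bigl(\bbE[\calC_n\mid N_n]\bigr)\;=\;\Var\bigl(f(N_n)\bigr),
\]
with $f(m):=\bbE[\Capa(\tilde\calR_m)]$. A delta-method argument using $f(m)\asymp m$ (from strong transience of $\{\tilde S_n\}$) and $\Var(N_n)=np(1-p)$ then forces $\Var(\calC_n)\gtrsim n$, yielding $\sigma_d>0$. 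Making this lower bound sufficiently quantitative, and in particular verifying that the loop-induced binomial fluctuation is not cancelled by the geometric contributions buried in $f'$, is in my view the most delicate point of the whole argument.
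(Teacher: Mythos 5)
Your overall architecture (block decomposition, error control via Green-function interactions, Lindeberg--Feller, loop decomposition for non-degeneracy) matches the paper's, but two of your key steps have genuine gaps. First, you assert $\Var(\calC_n)\sim\sigma_d^2 n$ from $\Var(Y_1)=O(m)$ plus smallness of the overlaps, but block independence only gives the consistency relation $\Var(\calC_n)/n\approx \Var(\calC_m)/m+o(1)$; it does not prove that the sequence $\Var(\calC_n)/n$ converges. The paper establishes convergence by feeding the two-block decomposition into Hammersley's subadditivity lemma, and this requires an a priori bound of the form $\Vert\calC_n-\bbE[\calC_n]\Vert_2\le c\sqrt{n}\,h_d(n)$ (respectively $cn^{3\Delta/2}h_d(n)$ when $5\alpha/2<d<3\alpha$) obtained by a dyadic bootstrap; this is precisely where $d>5\alpha/2$ enters, and it is absent from your sketch. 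Relatedly, your Lindeberg bound $\bbE[(Y_1-\bbE Y_1)^{2+\delta}]=O(m^{1+\delta/2})$ does not follow from $\Capa(A)\le|A|$ together with moments of $\#\calR_m$: the trivial bound only gives $|Y_1-\bbE Y_1|\le m+1$, hence $O(m^{1+\delta})$, which forces $L\gg\sqrt{n}$ blocks and then destroys the error control. The paper instead proves $\bbE[(\calC_n-\bbE[\calC_n])^4]\le Cn^2$ by a separate dyadic argument (Lemma \ref{lm:norm_4_of_Cn}) and uses Cauchy--Schwarz plus Chebyshev.

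Second, and most seriously, your non-degeneracy argument does not close. The inequality $\Var(\calC_n)\ge\Var(f(N_n))$ with $f(m)=\bbE[\Capa(\widetilde\calR_m)]$ is correct, but to conclude $\Var(f(N_n))\gtrsim n$ from the $O(\sqrt n)$ fluctuations of $N_n$ you need $f(m+k)-f(m)\asymp k$ for $k\asymp\sqrt{m}$, i.e.\ linear growth of the mean capacity at the CLT scale. The strong law only gives $f(m)=\widetilde\mu\, m+o(m)$, and monotonicity plus the $1$-Lipschitz property of $f$ do not rule out $f$ being nearly flat on windows of width $\sqrt n$; you flag this yourself as the delicate point but offer no resolution, and the delta method is not one. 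The paper avoids this entirely: it fixes three times $i_n<j_n<k_n$ separated by $\asymp A\sqrt n$, anchors on whichever of $\bbP(\widetilde\calC_{j_n}\ge\bbE[\widehat\calC_n])\ge 1/2$ or its complement holds, uses the a.s.\ growth $\widetilde\calC[j_n,k_n]\approx A\mu_d\sqrt n$ of the loop-free capacity increment together with the error bound of Lemma \ref{lm:sum_sum_G_ge_sqrt_n}, and then uses the CLT for $N_{k_n-1}$ (resp.\ $N_{i_n-1}$) to realign the time index to $n$; this yields $\bbP(|\widehat\calC_n-\bbE[\widehat\calC_n]|\ge c\sqrt n)\ge c_A/4$ and hence $\Var(\calC_n)\ge cn$ by Chebyshev, with no regularity of $f$ needed beyond the strong law.
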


\subsection*{Outline of the proof}
Let us briefly explain the main steps of the proof. We follow the path of \cite{Asselah_Zd} but with a number of different ideas and approaches. The proof  itself follows from the Lindeberg-Feller central limit theorem \cite[Theorem 3.4.5]{Durrett} which requires a certain control of the asymptotic behavior (arithmetic mean and tail behavior) of   $\{\Var(\calC_n)\}_{n\ge0}$. As the key  results in this direction we show that
\begin{itemize}
\item[(i) ] the sequence $\{\Var(\calC_n)/n\}_{n\ge1}$ converges (see Lemma \ref{lm:existence_of_Varn/n_limit}), and
\item[(ii)] the limit is strictly positive (see Lemma \ref{positivity}). 
\end{itemize}
With this in hands and a  more general form of the following capacity decomposition 
\begin{equation*}
\calC_m+{\rm Cap}(\calR[m,n])-2G(\calR_m,\calR[m,n])\le\calC_n
\le 
\calC_m+{\rm Cap}(\calR[m,n]),\qquad 0\leq m\leq n,
\end{equation*}
which was obtained in \cite[Corollary 2.1]{Asselah_Zd}, we conclude that the left hand side in \eqref{eq:CLT} converges in distribution to a zero-mean normal law with variance $\sigma_d^2$ which is exactly the limit of $\{\Var(\calC_n)/n\}_{n\ge1}$. Here
\begin{align}\label{cap_dec}
G(\calR_m,\calR[m,n ])=\! \! \! \! \sum_{x\in \calR_m,\, y\in \calR[m,n]}G(x,y),\qquad 0\leq m\leq n,
\end{align}
 is the error term which is the main object to be studied in order to get estimates of the sequence $\{\Var(\calC_n)\}_{n\ge0}$. 
 
 The proof of step (i) follows the approach from \cite{Asselah_Zd} which bases on the estimates of the moments of $\{\calC_n\}_{n\ge0}$ extracted from \cite{LeGall-French}, combined with an application of Hammersley's lemma (see \cite{Hammersley}). Also, this is the place in the article where the restriction to $d/\alpha > 5 /2$ plays a key role.  
 
 To conclude step (ii) we require (\textbf{A4}). The proof  is based on a random time-change argument and  loop decomposition technique (see Subsection \ref{LOOPS}).

\subsection*{Literature overview and related results}

The study on the range process $\{\calR_n\}_{n\ge0}$ of a $\ZZ^d$-valued random walk $\{S_n\}_{n\ge0}$ has a long history. A pioneering work is due to Dvoretzky and Erd\"{o}s \cite{Dvoretzky} where they obtained a law of large numbers for $\{\#\calR_n\}_{n\ge0}$ when $\{S_n\}_{n\ge0}$ is the simple random walk and $d\ge2$. Here, $\#\calR_n$ denotes the cardinality of $\calR_n$. The result was later extended by Spitzer  \cite{Spitzer} for an arbitrary random walk in $d\ge1$.  Central limit theorem for $\{\#\calR_n\}_{n\ge0}$ was obtained by Jain and Orey \cite{Jain-Orey} when $\{S_n\}_{n\ge0}$ is strongly transient. 
Le Gall and Rosen \cite{LeGall-Rosen} were the first who considered the strong law of large numbers and the central limit theorem for 
$\{\#\calR_n\}_{n\ge0}$  in the case when $\{S_n\}_{n\ge0}$ is a stable aperiodic random walk, that is, when it satisfies (\textbf{A1}) and (\textbf{A3}). On the other hand, the first results on the long-time behavior  of the capacity process $\{\calC_n\}_{n\ge0}$  are due to  Jain and Orey \cite{Jain-Orey} where they obtained a version of the strong law of large numbers for any transient random walk. 
Very recently Asselah, Schapira and Sousi \cite{Asselah_Zd} proved  a central limit theorem  for $\{\calC_n\}_{n\ge0}$ for the simple random walk in $d\ge6$. In the case when $d=5$ Schapira \cite{Schapira} obtained analogous result for a class of symmetric random walks which fulfil some moment condition. Versions of  a law of large numbers and central limit theorem in the case $d=4$ 
 were proved by Asselah, Schapira and Sousi in \cite{Asselah_Z4}, 
 see also \cite{Chang} for $d=3$.
 Asselah and  Schapira  \cite{Asselah} established also a 
 large deviation principle for $d\ge5$.
 
 The aim of this article is to obtain a central limit theorem for the capacity  of the range process for a class of $\alpha$-stable strongly transient random walks with the index satisfying $d /\alpha> 5 /2$. To the best of our knowledge this is the first result in this direction dealing with random walks that do not have finite second moment. Our  motivation comes from the  article by Le Gall and Rosen \cite{LeGall-Rosen}, and  approach developed by Asselah, Schapira and Sousi \cite{Asselah_Zd}. A type of the limit behaviour of the sequence $\{\calC_n\}_{n\ge0}$ depends on the value of the ratio $d/\alpha$. 
We observe that our central limit theorem reveals that the capacity of the range of stable random walks with $d/\alpha>5 /2$ behaves as the cardinality of the range for $d/\alpha>3 /2$, cf. \cite[Result 1]{LeGall-Rosen}. 
If $d/\alpha = 5/2$ we conjecture that the limit law is again normal but the scaling sequence should be of the form $\sqrt{ng(n)}$, where  $g(n)= \sum_{k=1}^n k^2b(k)^{-2d}$ is a slowly varying function. This corresponds to the scaling sequence for the range process in the case $d/\alpha = 3/2$, as established in \cite[Section 4.5]{LeGall-Rosen}. We remark that the case when $\alpha =2$ and $d=5$ has been recently partially solved by Schapira in \cite{Schapira}, where the author considers a class of symmetric random walks which satisfy certain moment condition and  obtains the normal law in the limit under the scaling $\sqrt{n\log n}$.
For $2\leq d/\alpha<5/2$ the limit law should be non-normal and we expect yet another scaling sequence which would involve the truncated Green function, cf.\ \cite[Result 2]{LeGall-Rosen}. 
%
%
The study of the case  $2 \le d/\alpha \leq 5 /2$ is an ongoing project and it is postponed to follow-up articles.

\section{On the SLLN for $\{\calC_n\}_{n\ge0}$}

In this section, we prove that under  (\textbf{A2}) the sequence $\{\calC_n\}_{n\ge0}$ satisfies a (version of) strong law of large numbers with strictly positive limit. This result will be  crucial  in showing that the limit in \eqref{CLT} is non-degenerate (see Section \ref{S5}).
Recall first that 
for any transient random walk on $\ZZ^d$ it holds that the corresponding capacity process 
 $\{\calC_n\}_{n\ge0}$ satisfies \begin{equation}\label{Jain_Orey}
\lim_{n\nearrow \infty}\frac{\calC_n}{n}= \mu_{d}\qquad \bbP\text{-a.s}.
\end{equation} (see \cite[Theorem 2]{Jain-Orey}). In the rest of this section we show that under  (\textbf{A2})
the constant $\mu_d$ is necessarily strictly positive. We start with the following auxiliary lemma. Recall that for $A,B\subseteq\ZZ^d$ the quantity $G(A,B)$ is defined as 
\begin{align*}
G(A,B) =\sum_{x\in A,\,y\in B}G(x,y).
\end{align*}

\begin{lemma}\label{lm:exp_g_strong_transience}
	Assume  (\textbf{A2}). Then there is a constant $C>0$ such that
	\begin{align*}
	\bbE\left[ G(\calR_n, \calR_n)\right] \le Cn,\qquad n\geq 1.
	\end{align*}
\end{lemma}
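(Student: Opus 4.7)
The plan is to reduce the sum over the random set $\mathcal{R}_n$ to a sum over times, exploit translation invariance and symmetry to collapse everything to $p_k(0)$, and then invoke strong transience to get linear growth.

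\textbf{Step 1: Pass from points in the range to times.} For $x\in\mathbb{Z}^d$ let $N_x(n)=\sum_{i=0}^n\bbjedan\{S_i=x\}$. Since $N_x(n)\ge 1$ for every $x\in\calR_n$ and $G\ge 0$,
\begin{equation*}
G(\calR_n,\calR_n)=\sum_{x,y\in\calR_n}G(x-y)\le \sum_{x,y\in\calR_n}N_x(n)N_y(n)G(x-y)=\sum_{i=0}^n\sum_{j=0}^n G(S_i-S_j).
\end{equation*}
This turns the random set on the left into a deterministic double sum of functionals of the walk, which is the only ``non-routine'' step in the argument.

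\textbf{Step 2: Compute $\bbE[G(S_k)]$.} Taking expectations, using $S_j-S_i\stackrel{d}{=}S_{j-i}$ for $j\ge i$, and separating the diagonal,
\begin{equation*}
\bbE[G(\calR_n,\calR_n)]\le (n+1)G(0)+2\sum_{k=1}^n (n+1-k)\,\bbE[G(S_k)].
\end{equation*}
For each $k\ge 1$, the symmetry assumption in (\textbf{A2}) gives $p_m(y)=p_m(-y)$, so that
\begin{equation*}
\bbE[G(S_k)]=\sum_{y\in\ZZ^d}p_k(y)\sum_{m\ge 0}p_m(y)=\sum_{m\ge 0}\sum_{y\in\ZZ^d}p_k(y)p_m(-y)=\sum_{m\ge 0}p_{k+m}(0)=\sum_{j\ge k}p_j(0).
\end{equation*}

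\textbf{Step 3: Apply strong transience.} Plugging this in and bounding $n+1-k\le n+1$,
\begin{equation*}
\sum_{k=1}^n(n+1-k)\sum_{j\ge k}p_j(0)\le (n+1)\sum_{k=1}^{\infty}\sum_{j\ge k}p_j(0)=(n+1)\sum_{j\ge 1}j\,p_j(0).
\end{equation*}
By (\textbf{A2}) the series $\sum_{j\ge 1}j\,p_j(0)$ is finite, which is precisely the strong transience hypothesis. Combining with the diagonal term $(n+1)G(0)$ (finite since $\{S_n\}$ is transient) gives $\bbE[G(\calR_n,\calR_n)]\le Cn$ for some constant $C>0$.

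The only mildly delicate point is Step~1: one has to be careful that the overcounting by multiplicities is harmless because $G$ is nonnegative. Everything else is bookkeeping; notably, strong transience is used in exactly the sharp way to turn $\sum_j p_j(0)\sum_{k\le j}1$ into the finite quantity $\sum_j j\,p_j(0)$, which is why the assumption enters here rather than mere transience.
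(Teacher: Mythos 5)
Your proof is correct and follows essentially the same route as the paper: pass from the random set $\calR_n$ to a double sum over times, use symmetry and Chapman--Kolmogorov to identify $\bbE[G(S_k)]=\sum_{j\ge k}p_j(0)$, and sum to land on $\sum_j j\,p_j(0)<\infty$. If anything, your Step~1 is slightly more careful than the paper's, which writes the passage from $\sum_{x,y\in\calR_n}$ to $\sum_{k,l}$ as an equality rather than the inequality it really is once multiplicities are accounted for.
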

\begin{proof}
	 We have
	\begin{align*}
	\bbE\left[ G(\calR_n, \calR_n)\right] 
	&=\bbE\ugl{\sum_{k, l = 0}^n G(S_k, S_l)}
	= \bbE\ugl{\sum_{k = 0}^n G(S_0) 
		+ \sum_{k = 0}^n \sum_{\substack{l = 0 \\ l \neq k}}^n G(S_{\aps{k - l}})} \\
	& \le \bbE\ugl{(n+1) G(0) + 2\sum_{k = 0}^n  \sum_{l = 1}^n G(S_l)} \le 2 G(0)(n+1) \obl{1 + \sum_{l = 1}^n \bbE[ G(S_l)]} \\
	& = 2G(0)(n+1)\obl{1 + \sum_{l = 1}^n \sum_{x \in \bbZ^d} G(x) \bbP(S_l = x)} \\
	& = 2G(0)(n+1) \obl{1 + \sum_{l = 1}^n \sum_{x \in \bbZ^d} \sum_{k = 0}^{\infty} p_k(0, x) p_l(x, 0)} \\
	& = 2G(0)(n+1)\obl{1 + \sum_{l = 1}^n \sum_{k = 0}^{\infty} p_{k + l}(0)}\\
	&\le 2G(0)(n+1)\obl{1 + \sum_{l = 1}^{\infty} \sum_{k = l}^{\infty} p_k(0)}\\
	& = 2G(0)(n+1) \obl{1 + \sum_{k = 1}^{\infty} kp_k(0)} \le Cn,
	\end{align*}		
	where the last inequality follows from (\textbf{A2}).
\end{proof}
We now show that $0$ cannot be an accumulation point of $\{\mathbb{E}[\calC_n]/n\}_{n\ge1}$.
\begin{proposition}\label{prop:E[C_n]_ge_cn}
	Assume  (\textbf{A2}). Then there is a constant $c > 0$ such that 
	\begin{align*}
	\liminf_{n\nearrow\infty}\frac{\bbE[\calC_n]}{n} \ge c.
	\end{align*}
\end{proposition}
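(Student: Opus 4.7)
The plan is to derive the linear lower bound on $\bbE[\calC_n]$ by combining a variational/Cauchy--Schwarz lower bound on the capacity in terms of the cardinality $\#\calR_n$ and the Green energy $G(\calR_n,\calR_n)$, Lemma \ref{lm:exp_g_strong_transience}, and the classical linear lower bound on $\bbE[\#\calR_n]$ that is available under transience. The starting point is the variational characterization
\[
\Capa(A)^{-1} = \inf\VIT{\sum_{x,y\in A} G(x,y)\nu(x)\nu(y) : \nu \text{ a probability measure on } A},
\]
valid for any finite nonempty $A\subseteq\ZZ^d$; this is a consequence of $\Capa(A) = \sum_{x\in A} e_A(x)$ with $e_A(x) = \bbP_x(T^+_A=\infty)$ and the fact that the normalized equilibrium measure $e_A/\Capa(A)$ realizes the infimum. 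Testing the inequality against the uniform measure on $\calR_n$ gives the pointwise bound
\[
\calC_n \ge \frac{(\#\calR_n)^2}{G(\calR_n,\calR_n)}\qquad \bbP\text{-a.s.}
\]

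Next, I would write $\#\calR_n = \frac{\#\calR_n}{\sqrt{G(\calR_n,\calR_n)}}\cdot\sqrt{G(\calR_n,\calR_n)}$ and apply Cauchy--Schwarz under $\bbE$, which yields
\[
\bbE[\#\calR_n]^2 \le \bbE\ugl{\frac{(\#\calR_n)^2}{G(\calR_n,\calR_n)}}\,\bbE[G(\calR_n,\calR_n)] \le \bbE[\calC_n]\,\bbE[G(\calR_n,\calR_n)].
\]
By Lemma \ref{lm:exp_g_strong_transience}, the second factor on the right is at most $Cn$ for some $C>0$, so that $\bbE[\calC_n]\ge \bbE[\#\calR_n]^2/(Cn)$ and the problem reduces to showing the matching lower bound $\bbE[\#\calR_n]\ge c_1 n$ for $n$ large.

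This last step is classical. Using the last-visit decomposition
\[
\#\calR_n = \sum_{k=0}^n \bbjedan\{S_k\ne S_j \text{ for all } k<j\le n\},
\]
the Markov property at time $k$ together with spatial homogeneity gives
\[
\bbE[\#\calR_n] = \sum_{m=0}^n \bbP(T^+_0>m) \ge (n+1)\,\bbP(T^+_0=\infty),
\]
and the latter probability is strictly positive because (\textbf{A2}) implies ordinary transience. Putting everything together leads to $\liminf_{n\nearrow\infty}\bbE[\calC_n]/n \ge \bbP(T^+_0=\infty)^2/C > 0$, which is the desired conclusion. The only non-routine ingredient is the variational inequality $\calC_n\ge(\#\calR_n)^2/G(\calR_n,\calR_n)$, which is a standard consequence of potential theory; strong transience enters exclusively through Lemma \ref{lm:exp_g_strong_transience} and is precisely what keeps the Green energy of the range growing at most linearly, so that it can be matched by $\bbE[\#\calR_n]^2\asymp n^2$ in the denominator.
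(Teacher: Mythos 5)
Your proof is correct, and it rests on the same two pillars as the paper's: the variational (energy) characterization of the capacity for symmetric walks, and Lemma \ref{lm:exp_g_strong_transience}. The difference is in the choice of test measure and in how you pass to expectations. The paper tests the energy functional against the (random) occupation measure $\nu_n=\frac{1}{n}\sum_{k=1}^n\delta_{S_k}$, which gives $\calC_n\ge n^2/G(\calR_n,\calR_n)$ directly, and then a single application of Jensen's inequality to the convex map $x\mapsto 1/x$ yields $\bbE[\calC_n]\ge n^2/\bbE[G(\calR_n,\calR_n)]\ge n/C$ with no further input. You instead test against the uniform measure on $\calR_n$, which produces $\calC_n\ge(\#\calR_n)^2/G(\calR_n,\calR_n)$; this forces you to bring in Cauchy--Schwarz and, crucially, the classical linear lower bound $\bbE[\#\calR_n]\ge(n+1)\bbP(T^+_0=\infty)$ from the last-visit decomposition under transience. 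That extra ingredient is standard and correctly invoked (transience gives $\bbP(T^+_0=\infty)>0$, and $G(\calR_n,\calR_n)\ge G(0)>0$ so the Cauchy--Schwarz split is legitimate), so your argument is complete; it is simply a step longer. The occupation-measure choice is the more economical one precisely because it replaces the cardinality of the range by the deterministic normalization $n$, sidestepping any need to control $\bbE[\#\calR_n]$ from below. In both arguments strong transience enters only through Lemma \ref{lm:exp_g_strong_transience}, as you correctly observe.
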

\begin{proof}

	For fixed $n\ge1$ we consider the following (random) probability measure defined on $\bbZ^d$ 
	\begin{equation}\label{eq:def_of_nu}
	\nu_n(x) = \frac{1}{n} \sum_{k = 1}^{n} \delta_{S_k}(x).
	\end{equation}
	Clearly, $\mathrm{supp}\,\nu_n =\mathcal{R}[1,n]$.
	 According to \cite[Lemma 2.3]{Jain}, for symmetric random walks the capacity of a set $A\subseteq\ZZ^d$ has the following representation 
	 $$\mathrm{Cap}(A)=\frac{1}{\inf_\nu\sum_{x,y\in A}G(x,y)\nu(x)\nu(y)},$$ 
	 where the infimum is taken over all probability measures on $\ZZ^d$ with $\mathrm{supp}\,\nu\subseteq A.$
	By setting \begin{align*}
	\calJ(\nu_n)
	= \sum_{x, y \in \mathcal{R}_n} G(x, y) \nu_n(x)\nu_n(y) 
	= \frac{1}{n^2} G(\mathcal{R}[1,n], \mathcal{R}[1,n]),
	\end{align*}
 we obtain
	$
	\calC_n \ge (\calJ(\nu_n))^{-1}. 
	$
	Finally, by Jensen's inequality we have that	\begin{equation*}\label{eq:E[Cap(Q_R)]_ge}
	\bbE[\mathcal{C}_n] \ge \big(\bbE[\calJ(\nu_n)]\big)^{-1},
	\end{equation*} which together with
 Lemma \ref{lm:exp_g_strong_transience} proves the assertion.
\end{proof}
As a direct consequence of \eqref{Jain_Orey} and Proposition \ref{prop:E[C_n]_ge_cn} we conclude strict positivity of the constant $\mu_d$.
\begin{corollary}
	Under (\textbf{A2}) it holds that $\mu_d>0$.
\end{corollary}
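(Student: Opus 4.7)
The statement to prove is that $\mu_d > 0$ under (\textbf{A2}), given the two inputs: the almost-sure convergence $\calC_n/n \to \mu_d$ from \eqref{Jain_Orey}, and the lower bound $\liminf_{n\to\infty} \bbE[\calC_n]/n \ge c > 0$ from Proposition \ref{prop:E[C_n]_ge_cn}. The natural plan is to upgrade the a.s.\ convergence to convergence in mean and then combine.

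The key observation is that the capacity of any finite set $A \subseteq \ZZ^d$ satisfies the elementary bound $\mathrm{Cap}(A) \le \#A$, since each summand $\bbP_x(T^+_A = \infty)$ in the definition of $\mathrm{Cap}(A)$ is a probability, hence at most $1$. Applied to $A = \calR_n$, this gives the deterministic bound
\begin{equation*}
0 \le \frac{\calC_n}{n} \le \frac{\#\calR_n}{n} \le \frac{n+1}{n} \le 2, \qquad n\ge 1.
\end{equation*}
Since the nonnegative sequence $\calC_n/n$ is uniformly bounded, the bounded convergence theorem applied to \eqref{Jain_Orey} yields
\begin{equation*}
\lim_{n\nearrow\infty}\frac{\bbE[\calC_n]}{n} = \mu_d.
\end{equation*}
Combining this with Proposition \ref{prop:E[C_n]_ge_cn} we obtain
\begin{equation*}
\mu_d = \lim_{n\nearrow\infty}\frac{\bbE[\calC_n]}{n} = \liminf_{n\nearrow\infty}\frac{\bbE[\calC_n]}{n} \ge c > 0,
\end{equation*}
which is the claim.

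There is no real obstacle here; the only subtle point is the passage from a.s.\ convergence to convergence of expectations, and that is handled by the trivial but essential dominated-convergence argument via the uniform bound $\calC_n \le \#\calR_n \le n+1$. The whole proof is a two-line observation: the interesting work was done in Proposition \ref{prop:E[C_n]_ge_cn}, where strong transience via Lemma \ref{lm:exp_g_strong_transience} was used to produce the quantitative lower bound $c$.
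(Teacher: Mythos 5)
Your proof is correct and follows essentially the same route as the paper, which deduces the corollary directly by combining \eqref{Jain_Orey} with Proposition \ref{prop:E[C_n]_ge_cn}; your explicit justification of the passage from a.s.\ convergence to convergence of $\bbE[\calC_n]/n$ via the uniform bound $\calC_n \le \#\calR_n \le n+1$ and bounded convergence is exactly the (implicit) step the paper relies on.
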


\section{Error term estimates}

The goal of this section is to obtain estimates of the error term which is of the form  \eqref{cap_dec}. This will be crucial in the analysis of the sequence $\{\Var(\calC_n)\}_{n\ge0}$.
In the sequel we assume (\textbf{A3}). Recall that the function $b(x)$ is necessarily of the following form
\begin{equation*}
b(x) = x^{1/\alpha} \ell(x),\qquad x\geq 0,
\end{equation*}
where $\alpha \in (0,2]$ and $\ell(x)$ is a slowly varying function. Without loss of generality we may assume that $b(x)$ is continuous, increasing and $b(0)=0$ (see \cite{BGT_book}). If, in addition, (\textbf{A1}) holds true, then 
 by \cite[Proposition 2.4.]{LeGall-Rosen} there exists a constant $C > 0$ such that for any $n \ge 0$ and $x \in \bbZ^d$,
\begin{equation}\label{eq:bound_on_p(x)}
p_n(x) \le C(b(n))^{-d}.
\end{equation}
Recall that $\{S_n\}_{n\ge0}$ is transient if $d/ \alpha>1$ and it is strongly transient if $d/\alpha>2$.  Further, for $n\ge0$
we write $G_n(x, y)$ for the Green function up to time $n$, that is,
\begin{equation*}
G_n(x, y) = \sum_{k = 0}^n p_k(x,y),\qquad x,y\in\ZZ^d.
\end{equation*}
Also, similarly as before, we use the notation $G_n(x)= G_n(0,x)$, $x\in\ZZ^d$.
We start with the following auxiliary lemma.
\begin{lemma}\label{lm:bounds_on_GGG} Assume (\textbf{A1})-(\textbf{A3}). Then
	there exists a constant $C > 0$ such that for all $n\geq 1$ and  all $a \in \bbZ^d$,
	\begin{equation}\label{eq:bound_with_h_d}
	\sum_{x,y\in\ZZ^d} G_n( x) G_n( y) G(y-x + a) \le C h_d(n),
	\end{equation}
	where $h_d(n)$ is given by
	\[   
	h_d(n) = 
	\begin{cases}
	1, &\quad d/\alpha  > 3,\\
	\sum_{k=1}^n k^{-1}\ell (k)^{-d}, & \quad d /\alpha = 3,\\
	n^3 (b(n))^{-d}, & \quad 2< d/\alpha < 3.\\
	\end{cases}
	\]
	Observe that the function $n\mapsto \sum_{k=1}^n k^{-1}\ell (k)^{-d}$ is non-decreasing and slowly varying.
\end{lemma}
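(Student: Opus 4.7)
The plan is to reduce the triple sum on the left-hand side of \eqref{eq:bound_with_h_d} to a single series in the transition probabilities $p_j(a)$ by combining symmetry from (\textbf{A2}) with the Chapman--Kolmogorov identity, and then to apply the uniform heat-kernel estimate \eqref{eq:bound_on_p(x)} together with Karamata's theorem for regularly varying sequences.

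First I would expand $G_n$ and $G$ as sums of $p_k$'s to get
\[
\sum_{x,y\in\bbZ^d} G_n(x)\, G_n(y)\, G(y-x+a)
= \sum_{k=0}^n \sum_{l=0}^n \sum_{m=0}^\infty \sum_{x,y\in\bbZ^d} p_k(x)\, p_l(y)\, p_m(y-x+a).
\]
Using the symmetry $p_m(-z)=p_m(z)$ and applying Chapman--Kolmogorov first in $y$ and then in $x$, the inner double sum collapses to $p_{k+l+m}(a)$. Consequently, the left-hand side equals
\[
\sum_{j=0}^\infty N(j,n)\, p_j(a), \qquad
N(j,n) := \#\bigl\{(k,l,m) : 0\leq k,l\leq n,\ m\geq 0,\ k+l+m=j\bigr\},
\]
and one immediately checks $N(j,n)\leq (j+1)^2$ for $j\leq n$ and $N(j,n)\leq (n+1)^2$ for $j>n$.

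Next I would insert the uniform bound $p_j(a)\leq Cb(j)^{-d}$ from \eqref{eq:bound_on_p(x)} (the term $j=0$ being absorbed into a constant) and exploit $b(j)^{-d}=j^{-d/\alpha}\ell(j)^{-d}$ to obtain
\[
\sum_{x,y} G_n(x)G_n(y)G(y-x+a) \;\leq\; C + C\sum_{j=1}^{n} j^{\,2-d/\alpha}\,\ell(j)^{-d} + C n^2 \sum_{j=n+1}^\infty j^{-d/\alpha}\,\ell(j)^{-d}.
\]
A case split on $d/\alpha$ then yields the three bounds in the lemma. If $d/\alpha>3$, the first sum converges and the tail sum is of order $n^{3-d/\alpha}\ell(n)^{-d}$, which is bounded uniformly in $n\geq 1$, giving $h_d(n)=1$. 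If $2<d/\alpha<3$, Karamata's theorem shows that both sums are of order $n^{3-d/\alpha}\ell(n)^{-d}=n^3 b(n)^{-d}$. If $d/\alpha=3$, the first sum is exactly $\sum_{j=1}^{n} j^{-1}\ell(j)^{-d}$, while the tail sum is asymptotic to $\ell(n)^{-d}/2$, which is absorbed by the first.

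The main technical point will be the critical case $d/\alpha=3$: one has to verify that the partial sum $\sum_{j=1}^{n} j^{-1}\ell(j)^{-d}$ genuinely dominates the tail contribution of order $\ell(n)^{-d}$. This should follow by restricting the sum to $j\in[n/2,n]$ and invoking $\ell(j)/\ell(n)\to 1$ uniformly on such intervals to produce a lower bound of order $\ell(n)^{-d}$, the monotonicity and slow variation properties recorded in the statement being a standard consequence of Karamata's theory. The remaining bookkeeping in the other two regimes is a routine application of Karamata's theorem to the head and tail of a sum of a regularly varying sequence.
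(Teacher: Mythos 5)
Your proposal is correct and follows essentially the same route as the paper: both collapse the double spatial sum to $p_{k+l+m}(a)$ via symmetry and Chapman--Kolmogorov, insert the uniform bound $p_j(a)\le C\,b(j)^{-d}$ from \eqref{eq:bound_on_p(x)}, and conclude with a Karamata-type case analysis on $d/\alpha$. The only difference is bookkeeping --- you count the triples $(k,l,m)$ with $k+l+m=j$ and split the resulting single sum at $j=n$, whereas the paper iterates the tail estimates one index at a time to arrive at $\sum_{k\le n}k^2 b(k)^{-d}$ --- and your explicit verification that the tail contribution of order $\ell(n)^{-d}$ is absorbed by $\sum_{j\le n}j^{-1}\ell(j)^{-d}$ in the critical case $d=3\alpha$ is exactly the right point to check.
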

\begin{proof}
	By \eqref{eq:bound_on_p(x)} we have that for all $k, j \geq 0$,
	\begin{align*}
	\sum_{x,y\in\ZZ^d } 
	p_k(0, x) p_j(0, y)  G(x, y + a) &= \sum_{i = 0}^{\infty} \sum_{x, y\in\ZZ^d } p_k(0, x) p_j(a, y + a) p_i(x, y + a) \\
	& = \sum_{i = 0}^{\infty} \sum_{x, y \in\ZZ^d} p_k(0, x) p_i(x, y + a) p_j(y + a, a) \\
	&= \sum_{i = 0}^{\infty} p_{k + i + j}(0, a)
	\le c_1\sum_{i = k + j}^{\infty} b(i)^{-d} \\
	&\le c_2 (j + k)\, b(j + k)^{-d},
	\end{align*}
	where the last inequality follows from \cite[Proposition 1.5.10]{BGT_book}. Summing over $j$ from the set $\{0,1,\ldots ,n\}$ yields
	\begin{align*}
	\sum_{x,y\in\ZZ^d  } 
	p_k(0, x) G_n( y) G(x, y+a) 
	\le c_2 \sum_{j = 0}^n (j + k) b(j + k)^{-d} 
	\le c_3 k^2 b(k)^{-d},
	\end{align*}
	where we again used \cite[Proposition 1.5.10]{BGT_book} together with the fact that $d/\alpha > 2$. Summing over $k=0,1,\ldots ,n$ gives 
	\begin{equation*}
	\sum_{x,y\in\ZZ^d }  G_n( x) G_n( y) G(x, y+a) \le c_3 \sum_{k = 0}^n k^2 b(k)^{-d}.
	\end{equation*}
	For $d/\alpha > 3$ we can again apply \cite[Proposition 1.5.10]{BGT_book} to get
	\begin{equation*}
	\sum_{k = 1}^{\infty} k^2 b(k)^{-d} < \infty.
	\end{equation*}
	Hence, for $d /\alpha> 3$ we set $h_d(x) = 1$. 
	If $d /\alpha= 3$ we obtain
	\begin{equation*}
	\sum_{k = 1}^n k^2 b(k)^{-d} = \sum_{k = 1}^n k^{-1}\ell (k)^{-d},
	\end{equation*}
	as desired. We mention that slow variation of $n\mapsto \sum_{k=1}^n k^{-1}\ell (k)^{-d}$ follows  from \cite[Lemma 2.2]{LeGall-Rosen}.
	Finally, for $2 < d/\alpha < 3$ we apply \cite[Theorem 1.5.11]{BGT_book} to get
	\begin{equation*}
	\sum_{k = 1}^n k^2 b(k)^{-d} \le c_4 n^3 b(n)^{-d},\qquad n\geq 1,
	\end{equation*}
	what finishes the proof.
\end{proof}

We next obtain estimates of the error term. Let us remark here that a similar result has been obtained in \cite[Lemma 3.2]{Asselah_Zd} for the simple random walk only.  We give an alternative proof of this result which relies on the Markovian structure of random walks and was motivated by techniques that were applied to estimate moments of intersection times for random walks, cf. \cite{Chen} and \cite{Lawler}.
Our approach is valid for all random walks satisfying  (\textbf{A1})-(\textbf{A3}).

\begin{lemma}\label{lm:E_of_sum_sum_G^k} Assume (\textbf{A1})-(\textbf{A3}). Let $\{S'_n\}_{n\ge0}$ be an independent copy of $\{S_n\}_{n\ge0}$ and denote the corresponding range process by $\{\calR'_n\}_{n\ge0}$.
Then, for all $k, n \ge1$ we have that
	\begin{equation*}
	\bbE\left[\big(G(\calR_n, \calR^\prime _n)\big)^k\right] \le C h_d(n)^k,
	\end{equation*}
	where $C>0$ is a constant that depends on $k$, and
	$h_d(n)$ is defined in Lemma \ref{lm:bounds_on_GGG}.
\end{lemma}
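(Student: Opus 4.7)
The plan is to overcount and then estimate moments by iteratively applying Lemma \ref{lm:bounds_on_GGG}. Since $\calR_n$ and $\calR'_n$ are subsets of $\ZZ^d$ and $G \ge 0$,
\begin{equation*}
G(\calR_n, \calR'_n) \le \mathcal{G}_n := \sum_{i=0}^n \sum_{j=0}^n G(S_i, S'_j),
\end{equation*}
so it suffices to show $\bbE[\mathcal{G}_n^k] \le C h_d(n)^k$. Expanding the $k$-th power and exploiting the symmetry under permutations of the label $l\in\{1,\ldots,k\}$ reduces the task, up to a combinatorial constant depending on $k$ and a lower-order contribution coming from ties among the $i_l$'s or $j_l$'s, to estimating, for each permutation $\pi \in \mathfrak{S}_k$, sums indexed by strictly ordered times $0 \le i_1 < \cdots < i_k \le n$ and $0 \le j^\circ_1 < \cdots < j^\circ_k \le n$, with $j_l = j^\circ_{\pi(l)}$.

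For fixed $\pi$ and ordered times, independence and the independent-increments property of $\{S_n\}_{n\ge0}$ and $\{S'_n\}_{n\ge0}$ give
\begin{equation*}
\bbE\left[\prod_{l=1}^k G\bigl(S_{i_l}, S'_{j^\circ_{\pi(l)}}\bigr)\right] = \sum_{\vec x, \vec y \in (\ZZ^d)^k} \prod_{l=1}^k p_{i_l - i_{l-1}}(x_l - x_{l-1})\, p_{j^\circ_l - j^\circ_{l-1}}(y_l - y_{l-1}) \prod_{l=1}^k G(y_{\pi(l)} - x_l),
\end{equation*}
with conventions $i_0 = j^\circ_0 = 0$ and $x_0 = y_0 = 0$. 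Summing over admissible ordered times and using $\sum_m p_m(\cdot) \le G_n(\cdot)$, the problem is reduced to verifying
\begin{equation*}
A(\pi) := \sum_{\vec x, \vec y \in (\ZZ^d)^k} \prod_{l=1}^k G_n(x_l - x_{l-1})\, G_n(y_l - y_{l-1}) \prod_{l=1}^k G(y_{\pi(l)} - x_l) \le C h_d(n)^k,\qquad \pi \in \mathfrak{S}_k.
\end{equation*}

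For $\pi = \mathrm{id}$, this is immediate by iteration: summing over the outermost pair $(x_k, y_k)$, with the remaining variables fixed, matches exactly the form of Lemma \ref{lm:bounds_on_GGG} with $a = y_{k-1} - x_{k-1}$, yielding a factor $C h_d(n)$ and the analogous expression at level $k-1$; iterating $k$ times gives $A(\mathrm{id}) \le C^k h_d(n)^k$. The main obstacle is a general $\pi$, since when $\pi(k) \ne k$ the variables $x_k$ and $y_k$ are no longer linked by a common $G$, and the direct peel-off fails. I would handle this by choosing a peel-off order adapted to $\pi$: at each stage locate an index $l$ such that the pair $(x_l, y_{\pi(l)})$ can be summed out against the single $G$ linking them together with the adjacent $G_n$-factors in the chain, using the symmetries $G(-z) = G(z)$ and $G_n(-z) = G_n(z)$ to rewrite the remaining factors into the form required by Lemma \ref{lm:bounds_on_GGG}; $k$ iterations then produce the desired bound $A(\pi) \le C^k h_d(n)^k$. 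This combinatorial bookkeeping, uniform in $\pi$, is the principal technical difficulty.
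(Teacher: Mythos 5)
Your reduction is essentially the paper's. The paper works with hitting times and the strong Markov property (via $r_n(x_1,\ldots,x_k)\le r_n(x_1,\ldots,x_{k-1})\,\bbP_{x_{k-1}}(T_{x_k}\le n)$ and $\bbP_x(T_y\le n)\le G_n(y-x)$), whereas you use time-indexed sums and independent increments; both routes land on the same object, namely two $G_n$-chains cross-linked by $k$ factors of $G$ according to a pairing permutation, to be peeled off two variables at a time via Lemma \ref{lm:bounds_on_GGG}. The genuine gap is the one you yourself flag and then do not close: the bound on $A(\pi)$ for $\pi\neq\mathrm{id}$. Your proposed fix --- at each stage locate an index $l$ such that the pair $(x_l,y_{\pi(l)})$ can be summed out against the single $G$ linking them together with the adjacent $G_n$-factors --- does not work as stated. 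Lemma \ref{lm:bounds_on_GGG} requires each of the two variables being summed out to carry exactly one $G_n$-factor, i.e.\ to be the terminal point of its chain. For a non-identity $\pi$ the two terminal points $x_k$ and $y_k$ are in general not linked by a common $G$, and every linked pair $(x_l,y_{\pi(l)})$ then contains at least one interior chain point carrying two $G_n$-factors. Already for $k=2$ and $\pi$ the transposition, the links are $\{x_1,y_2\}$ and $\{x_2,y_1\}$, and the candidate sum $\sum_{x_2,y_1}G_n(x_2-x_1)\,G_n(y_1)\,G_n(y_2-y_1)\,G(y_1-x_2)$ involves four factors rather than the three of Lemma \ref{lm:bounds_on_GGG}; no reordering and no use of $G(-z)=G(z)$ changes that count.

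Moreover, the crossed configurations cannot be dismissed with crude substitutes: bounding the extra $G$ by $2G(0)$, or using $\sum_{x}G_n(x-a)G(b-x)\le (G*G)(b-a)\le C$ (valid under strong transience), leaves behind a free factor $\sum_{x}G_n(x)= n+1$, which destroys the bound. So the ``combinatorial bookkeeping, uniform in $\pi$'' that you defer is not bookkeeping; it is the actual content of the estimate for $\pi\neq\mathrm{id}$ and requires an argument beyond iterated application of Lemma \ref{lm:bounds_on_GGG}. For what it is worth, the paper's own write-up is also terse here: after symmetrizing over permutations it retains only the diagonal pairing $\prod_iG(x_i,y_i)$, which corresponds precisely to your $\pi=\mathrm{id}$ case. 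You have therefore correctly located the delicate step, but your proposal asserts rather than proves the bound for non-identity pairings, and the assertion on which the sketch rests --- the existence at every stage of a linked pair matching the three-factor pattern of Lemma \ref{lm:bounds_on_GGG} --- is false.
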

\begin{proof}
	Let us consider the hitting times $T_x= \inf\{ n\geq 0:\, S_n=x\}$, $x\in\ZZ^d$. It then holds
	\begin{align*}
	\bbE\left[G(\calR_n, \calR^\prime _n)\right] = \sum_{x,y\in\ZZ^d} \mathbb{P} (T_x\leq n)\mathbb{P} (T_y\leq n)G(x,y).
	\end{align*}
	Since $\mathbb{P} (T_x\leq n)\leq G_n(x)$, for $k=1$ we conclude the result in view of Lemma \ref{lm:bounds_on_GGG}. For $k>1$ we proceed as follows. We first observe that
	\begin{align*}
	\bbE\left[\big(G(\calR_n, \calR^\prime _n)\big)^k\right] 
	= 
	\sum_{x_1\dots,x_k\in\ZZ^d}\sum_{y_1\dots ,y_k\in\ZZ^d}
	\bbE\Big[ \prod_{i=1}^k \bbjedan_{\{x_i\in \calR_n\}}\Big]
	\bbE\Big[ \prod_{i=1}^k \bbjedan_{\{y_i\in \calR_n\}}\Big]
	\prod_{i=1}^k G(x_i, y_i).
	\end{align*}
	For simplicity we use notation 
\begin{align*}
r_n(x_1,\ldots ,x_j) = \mathbb{P}(T_{x_1}\leq \ldots \leq T_{x_j}\leq n),\qquad 
j\ge1,\ x_1,\dots,x_j\in\ZZ^d.
\end{align*}	
	We clearly have
	\begin{align*}
	\bbE\Big[ \prod_{i=1}^k \bbjedan_{\{x_i\in \calR_n\}}\Big] \leq
	\sum_{\pi \in \Pi(k)} r_n(x_{\pi (1)},\ldots ,x_{\pi(k)}),
	\end{align*}
	where $\Pi(k)$ is the set of all permutations of the set $\{1,\ldots ,k\}$. Hence
	\begin{align*}
	\sum_{x_1,\dots ,x_k\in\ZZ^d} \bbE\Big[ \prod_{i=1}^k \bbjedan_{\{x_i\in \calR_n\}}\Big] \leq
	k! \!\! \sum_{x_1,\dots ,x_k\in\ZZ^d} r_n(x_1,\ldots ,x_k). 
	\end{align*}
	Notice that the strong Markov property employed at time $T_{x_{k-1}}$ implies that
	\begin{align*}
	r_n(x_1,\ldots ,x_k) \leq 
	r_n(x_1,\ldots ,x_{k-1})\, \mathbb{P}_{x_{k-1}}(T_{x_k}\leq n).
	\end{align*} 
	We thus obtain
	\begin{align*}
	\bbE\left[\big(G(\calR_n, \calR^\prime _n)\big)^k\right] 
	&\leq
	(k!)^2 \!\!\!\! \sum_{x_1\dots,x_{k-1}\in\ZZ^d}\sum_{y_1\dots ,y_{k-1}\in\ZZ^d}
	\!\!\!\!\!\!
	r_n(x_1,\ldots ,x_{k-1})\, 
	r_n(y_1,\ldots ,y_{k-1})
	\prod_{i=1}^{k-1} G(x_i, y_i)\\
	&\qquad \qquad \qquad 
	\times \sum_{x_k,y_k\in\ZZ^d}
	\mathbb{P}_{x_{k-1}}(T_{x_k}\leq n)
	\mathbb{P}_{y_{k-1}}(T_{y_k}\leq n)
	G(x_k, y_k).
	\end{align*}
	For the last term we have
	\begin{align*}
	\sum_{x_k,y_k\in\ZZ^d}
	\mathbb{P}_{x_{k-1}}(T_{x_k}\leq n)
	\mathbb{P}_{y_{k-1}}(T_{y_k}\leq n)
	G(x_k, y_k)
	\leq 
	\sum_{x_k,y_k\in\ZZ^d} G_n(x_{k-1}, x_k)G_n(y_{k-1}, y_k)	G(x_k,y_k)	
	\end{align*}
	and, by Lemma \ref{lm:bounds_on_GGG}, the last sum is bounded by a constant times $h_d(n)$. By repeating the same argument $k$ times we get the result.
\end{proof}

\section{Variance estimates}

In this section, we show that the limit of the sequence $\{\Var(\calC_n)/n\}_{n\ge1}$  exists if $d/ \alpha>5 /2$.  We follow the approach from \cite[Lemma 3.5]{Asselah_Zd}. The proof is based on the following two results which we state  for reader's convenience. The first one is Hammersley's lemma.

\begin{lemma}[{\cite[Theorem 2]{Hammersley}}]\label{Hammersley}
	Let $\{a_n\}_{n\ge1}$ and $\{b_n\}_{n\ge1}$  be sequences of real numbers satisfying $$ a_{n+m}\le a_n+a_m+b_{n+m},\qquad n,m\ge1.$$ If $\{b_n\}_{n\ge1}$ is non-decreasing and $$\sum_{n=1}^\infty\frac{b_n}{n^2}<\infty,$$ then $\{a_n/n\}_{n\ge1}$ converges to a finite limit.
	\end{lemma}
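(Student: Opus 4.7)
The plan is to adapt Fekete's classical subadditive-limit argument, treating $b_{n+m}$ as a controlled perturbation. Set $L=\liminf_n a_n/n$; the goal is to prove $\limsup_n a_n/n \le L$, which together with an easy upper bound on $\{a_n/n\}$ yields convergence to the finite limit $L$.

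As preliminaries I would extract two facts from the hypotheses. First, monotonicity of $\{b_n\}$ combined with $\sum b_n/n^2<\infty$ forces $b_n = o(n)$: indeed $b_n/n$ lower-bounds (up to a constant) the tail $\sum_{k\ge n} b_k/k^2$, which vanishes. Second, iterating the hypothesis with $m=n$ gives $a_{2n}\le 2a_n+b_{2n}$, so along powers of two $a_{2^k}/2^k\le a_1+\sum_{j=1}^k b_{2^j}/2^j$; Cauchy condensation bounds the last sum by a multiple of $\sum_n b_n/n^2<\infty$, showing that $\{a_n/n\}$ is bounded above.

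For the main step, fix $\varepsilon>0$ and choose $m$ large enough that both $a_m/m<L+\varepsilon$ and the tail $\sum_{k\ge m}b_k/k^2<\varepsilon$. For arbitrary $n$, write $n=qm+r$ with $0\le r<m$ and iterate the hypothesis $q$ times to obtain
\[
a_n \le q\, a_m + a_r + \sum_{j=2}^{q} b_{jm} + b_n.
\]
Dividing by $n$ and letting $n\to\infty$ with $m$ fixed, the first term tends to $a_m/m$, while $a_r/n\to 0$ (since $a_r$ takes only finitely many values as $r$ ranges over $\{0,\dots,m-1\}$) and $b_n/n\to 0$ by the preliminary estimate. This yields
\[
\limsup_n \frac{a_n}{n}\le \frac{a_m}{m} + \limsup_n \frac{1}{n}\sum_{j=2}^q b_{jm}.
\]

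The main obstacle is to show that the error term $(1/n)\sum_{j=2}^q b_{jm}$ is of order $\varepsilon$ uniformly in $n$, for suitably chosen $m$. Naive monotonicity bounds such as $b_{jm}\le b_n$ yield an estimate of order $b_n/m$, which need not vanish if $b$ grows close to linearly. The sharper estimate uses the full strength of $\sum b_n/n^2<\infty$: Abel summation (rewriting $b_{jm}/n=(j/q)\cdot b_{jm}/(jm)$ and using that $\sum_j b_{jm}/(jm)^2$ is comparable to $(1/m)\sum_k b_k/k^2$ via a sparse Riemann-sum comparison) relates the arithmetic sum to the tail $\sum_{k\ge m}b_k/k^2<\varepsilon$, bounding the error independently of $n$. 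Letting $m\to\infty$ along a sequence with $a_m/m\to L$ then gives $\limsup_n a_n/n\le L$, completing the proof.
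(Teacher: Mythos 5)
The paper does not actually prove this lemma; it is quoted from Hammersley's article, so your attempt has to stand on its own. Unfortunately its central estimate is false, and the failure occurs exactly at the point you yourself single out as ``the main obstacle''. In the Fekete-style decomposition $n=qm+r$ the accumulated error is $\frac1n\sum_{j=2}^{q}b_{jm}$, and you claim this is $O(\varepsilon)$ uniformly in $n$ once the tail $\sum_{k\ge m}b_k/k^2$ is below $\varepsilon$. Take $b_k=k/\log^2(k+1)$ (non-decreasing for $k$ large, and one may adjust finitely many initial terms; moreover $\sum_k b_k/k^2=\sum_k 1/(k\log^2(k+1))<\infty$, so all hypotheses hold). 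Then, for every fixed $m$,
\[
\frac1n\sum_{j=2}^{q}b_{jm}\;=\;\frac1n\sum_{j=2}^{q}\frac{jm}{\log^2(jm+1)}\;\ge\;\frac{m}{n\,\log^2(n+1)}\sum_{j=2}^{q}j\;\ge\;\frac{c\,q}{\log^2 n}\;\ge\;\frac{c\,n}{2m\log^2 n}\;\longrightarrow\;\infty
\]
as $n\nearrow\infty$. No summation by parts or Riemann-sum comparison can rescue this: the quantity genuinely diverges. Indeed, in your rewriting $\frac1n\sum_j b_{jm}\approx\frac1q\sum_j j\,c_j$ with $c_j=b_{jm}/(jm)$, the hypothesis only controls $\sum_j c_j/j$, and that does not control the weighted average $\frac1q\sum_{j\le q} j\,c_j$. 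Consequently the inequality $\limsup_n a_n/n\le a_m/m+O(\varepsilon)$ cannot be reached along this route; this is precisely why the theorem is not a routine perturbation of Fekete's lemma.

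The condition $\sum b_n/n^2<\infty$ is tailored to a dyadic rather than an arithmetic decomposition: to pass from scale $m$ to scale $n$ one should perform only $O(\log(n/m))$ splittings, incurring the error $b$ at geometrically spaced scales, so that the total error per unit length is of order $\sum_{\ell}b_{n2^{-\ell}}/(n2^{-\ell})\le C\sum_{k\ge m}b_k/k^2$ by exactly the condensation estimate you use in your preliminary step. That preliminary computation (boundedness of $a_{2^k}/2^k$) is therefore the germ of the correct argument, but it must be run between an arbitrary base scale $m$ and $n$, with non-dyadic remainders handled as in Hammersley's or de Bruijn--Erd\H{o}s's proofs; as written it only controls $a_n/n$ along powers of two and does not by itself bound the whole sequence. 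A smaller point: with no lower bound on $\{a_n\}$ the limit need not be finite (take $a_n=-n^2$, $b_n=0$, which satisfies the hypotheses while $a_n/n\to-\infty$); in the paper's application $a_n=\Var(\calC_n)\ge0$, so this is harmless there, but your proof nowhere establishes $\liminf_n a_n/n>-\infty$.
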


The second  is the capacity decomposition formula discussed in the introduction.

\begin{lemma}[{\cite[Proposition 25.11]{Spitzer} and \cite[Proposition 1.2]{Asselah_Zd}}]\label{capacity_decomp} 
Let $A,B\subset\ZZ^d$ be finite. Then $$\mathrm{Cap}(A)+\mathrm{Cap}(B)-2G(A,B)\le\mathrm{Cap}(A\cup B)\le \mathrm{Cap}(A)+\mathrm{Cap}(B)-\mathrm{Cap}(A\cap B).$$
	\end{lemma}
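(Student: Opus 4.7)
The plan is to work with the classical potential-theoretic description of capacity. For a finite set $C\subset\bbZ^d$, define the equilibrium measure $e_C(x)=\bbP_x(T^+_C=\infty)\bbjedan_{\{x\in C\}}$ so that $\mathrm{Cap}(C)=\sum_x e_C(x)$, together with the equilibrium potential $f_C(x)=\bbP_x(T_C<\infty)$. The single identity I will use throughout is the last-exit decomposition
\[
\bbP_x(T^+_C<\infty) \;\le\; \sum_{y\in C} G(x,y)\,e_C(y),
\]
valid for every $x\in\bbZ^d$ and with equality when $x\notin C$. Both bounds will follow by expanding $\mathrm{Cap}(A\cup B)=\sum_{x\in A}e_{A\cup B}(x)+\sum_{x\in B}e_{A\cup B}(x)-\sum_{x\in A\cap B}e_{A\cup B}(x)$ and comparing $e_{A\cup B}$ with $e_A$ and $e_B$.

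For the \emph{lower bound}, I would fix $x\in A$ and note that $\{T^+_A=\infty\}\setminus\{T^+_{A\cup B}=\infty\}\subseteq\{T^{(+)}_B<\infty\}$, where the superscript is $+$ if $x\in B$ and blank otherwise. The last-exit decomposition then gives $e_A(x)-e_{A\cup B}(x)\le\sum_{y\in B}G(x,y)e_B(y)$. Summing over $x\in A$ and using $e_B\le 1$ one obtains $\sum_{x\in A}e_{A\cup B}(x)\ge\mathrm{Cap}(A)-G(A,B)$; by symmetry the analogous inequality for $B$. Adding the two and re-expressing $\mathrm{Cap}(A\cup B)$ via the inclusion--exclusion formula above yields
\[
\mathrm{Cap}(A\cup B) \;\ge\; \mathrm{Cap}(A)+\mathrm{Cap}(B)-2G(A,B) + \sum_{x\in A\cap B}\bigl[e_A(x)+e_B(x)-e_{A\cup B}(x)\bigr],
\]
where the correction term is non-negative by monotonicity $e_{A\cup B}\le e_A\wedge e_B$ on $A\cap B$. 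Discarding it gives exactly the lower bound.

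For the \emph{upper bound}, I would switch to the Dirichlet-form formulation $\mathrm{Cap}(C)=\calE(f_C,f_C)$, where $\calE(f,g)=\tfrac12\sum_{x,y}(f(x)-f(y))(g(x)-g(y))p_1(x,y)$ is the Dirichlet form of the symmetric random walk. Since $f_A=1$ on $A$ and $f_B=1$ on $B$, the function $f_A\vee f_B$ equals $1$ on $A\cup B$ and $f_A\wedge f_B$ equals $1$ on $A\cap B$, so both are admissible test functions and
$\mathrm{Cap}(A\cup B)\le\calE(f_A\vee f_B)$ and $\mathrm{Cap}(A\cap B)\le\calE(f_A\wedge f_B)$. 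The Markov property of the Dirichlet form (a pointwise inequality $(a\vee c-b\vee d)^2+(a\wedge c-b\wedge d)^2\le(a-b)^2+(c-d)^2$ verified by case analysis) then yields $\calE(f_A\vee f_B)+\calE(f_A\wedge f_B)\le\calE(f_A)+\calE(f_B)$, and strong subadditivity $\mathrm{Cap}(A\cup B)+\mathrm{Cap}(A\cap B)\le\mathrm{Cap}(A)+\mathrm{Cap}(B)$ follows.

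The main obstacle I anticipate is the upper bound: a purely probabilistic argument via monotonicity of equilibrium measures fails because $e_{A\cap B}\ge e_A\vee e_B$ on $A\cap B$, which points the wrong way. Invoking the Dirichlet-form/Beurling--Deny truncation identity (or equivalently the potential-theoretic argument of Spitzer) is essentially unavoidable to close that gap.
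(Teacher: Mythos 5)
The paper offers no proof of this lemma: it is quoted from Spitzer and from Asselah--Schapira--Sousi, with only the remark that the latter's argument extends beyond the simple random walk. Your two-pronged strategy --- last-exit decomposition for the lower bound, the variational/Dirichlet-form characterisation of capacity plus the Markov (truncation) inequality for strong subadditivity --- is exactly the standard route taken in those references. The upper-bound half is complete and correct; note only that it genuinely uses symmetry of the walk (which is part of (\textbf{A2}), and which you also use silently in the form $G(A,B)=G(B,A)$ when adding the two one-sided estimates in the lower bound).

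The lower bound, however, has a concrete gap at the overlap $A\cap B$, which is precisely the case the paper needs (in \eqref{decomp} the two ranges share the point $S_n$). Adding $\sum_{x\in A}e_{A\cup B}(x)\ge\Capa(A)-G(A,B)$ to its analogue for $B$ and substituting the inclusion--exclusion identity yields
\[
\Capa(A\cup B)\;\ge\;\Capa(A)+\Capa(B)-2G(A,B)\;-\;\sum_{x\in A\cap B}e_{A\cup B}(x),
\]
i.e.\ a \emph{non-positive} leftover, not the non-negative correction $+\sum_{x\in A\cap B}\bigl[e_A(x)+e_B(x)-e_{A\cup B}(x)\bigr]$ that you display; the inequality you write down does not follow from the steps you describe, and a non-positive term cannot be discarded from the right-hand side of a lower bound. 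The repair is local. Split $A\cup B$ into $A\setminus B$, $B\setminus A$ and $A\cap B$, so that
\[
\Capa(A)+\Capa(B)-\Capa(A\cup B)=\sum_{x\in A\setminus B}\bigl(e_A(x)-e_{A\cup B}(x)\bigr)+\sum_{x\in B\setminus A}\bigl(e_B(x)-e_{A\cup B}(x)\bigr)+\sum_{x\in A\cap B}\bigl(e_A(x)+e_B(x)-e_{A\cup B}(x)\bigr).
\]
On $A\setminus B$ bound $e_A(x)-e_{A\cup B}(x)=\bbP_x(T^+_A=\infty,\;T^+_B<\infty)\le\bbP_x(T^+_B<\infty)\le G(x,B)$, and symmetrically on $B\setminus A$. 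On $A\cap B$ use $e_{A\cup B}(x)\ge e_A(x)+e_B(x)-1$ (since $\bbP_x(T^+_B<\infty)=1-e_B(x)$ there), so the summand is at most $1\le G(0)=G(x,x)\le\min\{G(x,A),G(x,B)\}$. Summing the three contributions gives at most $G(A,B)+G(B,A)=2G(A,B)$, which is the claimed bound. With this bookkeeping your argument is complete.
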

We remark that Proposition 1.2 in \cite{Asselah_Zd} is stated for a simple random walk only, but the proof is valid for an arbitrary transient random walk.

\begin{lemma}\label{lm:existence_of_Varn/n_limit}
	Assume (\textbf{A1})-(\textbf{A3}) and 
	 $d/\alpha > 5 /2$. Then the sequence $\{\Var(\calC_n)/n\}_{n\ge1}$ converges to $\sigma_d^2\ge0.$
\end{lemma}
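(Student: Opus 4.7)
The plan is to apply Hammersley's lemma (Lemma \ref{Hammersley}) directly to $a_n=\Var(\calC_n)$, so the main task is to establish an approximate subadditivity $a_{n+m}\le a_n+a_m+b_{n+m}$ with $b_n$ non-decreasing and $\sum_n b_n/n^2<\infty$.

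I would start by applying the capacity decomposition of Lemma \ref{capacity_decomp} with $A=\calR_m$ and $B=\calR[m,n]$ (whose union is $\calR_n$), obtaining
\begin{equation*}
\calC_m+\Capa(\calR[m,n])-2G(\calR_m,\calR[m,n])\le\calC_n\le\calC_m+\Capa(\calR[m,n]).
\end{equation*}
By the strong Markov property at time $m$ and translation invariance of the capacity, $\Capa(\calR[m,n])$ has the same law as $\calC_{n-m}$ and is independent of $\calC_m$. Writing $Y=\calC_m+\Capa(\calR[m,n])$ and $\xi=Y-\calC_n\in[0,\,2G(\calR_m,\calR[m,n])]$, the $L^2$-triangle inequality yields
\begin{equation*}
\sqrt{\Var(\calC_n)}\le\sqrt{\Var(\calC_m)+\Var(\calC_{n-m})}+2\sqrt{\bbE[G(\calR_m,\calR[m,n])^2]}.
\end{equation*}
Combining the Markov property with time-reversal symmetry shows that $G(\calR_m,\calR[m,n])$ has the same distribution as $G(\widetilde\calR_m,\calR^\prime_{n-m})$ for two independent copies of the range process; by the monotonicity $\widetilde\calR_m\subseteq\widetilde\calR_n$ and $\calR^\prime_{n-m}\subseteq\calR^\prime_n$, together with Lemma \ref{lm:E_of_sum_sum_G^k} applied for $k=2$, we obtain $\bbE[G(\calR_m,\calR[m,n])^2]\le Ch_d(n)^2$ and hence
\begin{equation*}
\sqrt{\Var(\calC_n)}\le\sqrt{\Var(\calC_m)+\Var(\calC_{n-m})}+C^\prime h_d(n).
\end{equation*}

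The main obstacle is turning this $L^2$-subadditivity into a genuine additive inequality for $\Var(\calC_n)$: squaring produces a cross term of order $h_d(n)\sqrt{\Var(\calC_m)+\Var(\calC_{n-m})}$ which must be dominated by a function of $n$ alone. To handle it I would first establish the a priori bound $\Var(\calC_n)\le Kn$ by a dyadic iteration: with $m=2^{j-1}$ and $n=2^j$, the previous inequality gives $\sqrt{\Var(\calC_{2^j})}\le\sqrt{2}\,\sqrt{\Var(\calC_{2^{j-1}})}+C^\prime h_d(2^j)$, and unrolling yields
\begin{equation*}
\sqrt{\Var(\calC_{2^j})}\le 2^{j/2}\Big(\sqrt{\Var(\calC_1)}+C^\prime\sum_{i=1}^j h_d(2^i)/2^{i/2}\Big).
\end{equation*}
Under $d>5\alpha/2$ the series $\sum_i h_d(2^i)/2^{i/2}$ converges (in the critical regime $2\alpha<d<3\alpha$ the terms are of order $2^{i(5/2-d/\alpha)}\ell(2^i)^{-d}$, which is geometric), so $\Var(\calC_{2^j})=O(2^j)$; a short interpolation argument based on the $L^2$-subadditivity extends the bound $\Var(\calC_n)\le Kn$ to all $n$.

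Armed with this a priori estimate, squaring the $L^2$-subadditivity and bounding the cross term by $2C^\prime\sqrt{Kn}\,h_d(n)$ produces
\begin{equation*}
\Var(\calC_n)\le\Var(\calC_m)+\Var(\calC_{n-m})+b_n,\qquad b_n:=2C^\prime\sqrt{Kn}\,h_d(n)+(C^\prime)^2 h_d(n)^2,
\end{equation*}
which is precisely Hammersley's hypothesis once $b_n$ is replaced by its running maximum. Summability $\sum_n b_n/n^2<\infty$ reduces to $\sum_n h_d(n)/n^{3/2}$ and $\sum_n h_d(n)^2/n^2$, both of which are finite exactly when $d>5\alpha/2$, as is readily checked from the three cases in the definition of $h_d$. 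Lemma \ref{Hammersley} then yields the existence of $\sigma_d^2:=\lim_n\Var(\calC_n)/n\ge 0$.
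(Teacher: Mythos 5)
Your proposal is correct and follows the same overall strategy as the paper: the capacity decomposition of Lemma \ref{capacity_decomp}, combined with the Markov property, time reversal and translation invariance, gives the $L^2$-subadditivity $\lVert\overline\calC_n\rVert_2\le(\Var(\calC_m)+\Var(\calC_{n-m}))^{1/2}+Ch_d(n)$; the cross term produced by squaring is controlled by an a priori bound obtained from a dyadic recursion; and Hammersley's lemma closes the argument. The one place where you genuinely depart from (and improve on) the paper is the a priori bound: by unrolling $v_j\le\sqrt{2}\,v_{j-1}+C'h_d(2^j)$ directly and keeping the convergent series $\sum_i h_d(2^i)2^{-i/2}$ (convergent precisely because $d>5\alpha/2$), you obtain the uniform bound $\Var(\calC_n)\le Kn$ in all regimes, whereas the paper divides by $h_d(2^k)$ before iterating and therefore treats $d\ge3\alpha$ and $5\alpha/2<d<3\alpha$ separately, getting only the weaker bounds $\lVert\overline\calC_n\rVert_2\le c\sqrt{n}\,h_d(n)$ and $\le cn^{3\Delta/2}h_d(n)$; your version makes the choice of $b_n$ and its summability cleaner. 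Two points to tighten: (i) the ``short interpolation argument'' from powers of $2$ to all $n$ cannot be a naive strong induction with a fixed constant $K$, since the cross term destroys the bound $\Var(\calC_n)\le Kn$ at each step; the correct implementation is to run the recursion for $V_k=\sup\{\lVert\overline\calC_n\rVert_2:\,2^k\le n\le2^{k+1}\}$ using the split $l=\lfloor n/2\rfloor$, $m=n-l$ (both indices landing in the previous dyadic block), exactly as the paper does with its $\alpha_k$; (ii) with the paper's convention $\calR[m,n]=\calR_n\setminus\calR_{m-1}$ the set $\calR[m,n]$ is neither independent of $\calR_m$ nor equal in law to $\calR_{n-m}$, so the decomposition should be applied to $A=\calR_m-S_m$ and $B=\{S_m,\dots,S_n\}-S_m$, which are independent with the right marginals; this is what the paper's proof actually does and what you clearly intend.
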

\begin{proof}
		Let $n,m\ge1$ be arbitrary. Due to space homogeneity of the capacity, that is, $\mathrm{Cap}(A)=\mathrm{Cap}(A+x),$ $x\in\ZZ^d$, $A\subseteq\ZZ^d$, we have that $$\calC_{n+m}=\mathrm{Cap}(\calR_{n+m}-S_n)=\mathrm{Cap}(\{S_0-S_n,\dots,S_n-S_n\}\cup\{S_{n}-S_n,\dots,S_{n+m}-S_n\}).$$ 
		Thus, according to  Lemma \ref{capacity_decomp},
		\begin{equation}\label{decomp}
	\calC_n^{(1)}+\calC_m^{(2)}-2G(\calR_n^{(1)},\calR_m^{(2)})\le \calC_{n+m}\le \calC_n^{(1)}+\calC_m^{(2)},\end{equation} where $\calC_n^{(1)}$ and $\calC_m^{(2)}$ ($\calR_n^{(1)}$ and $\calR_m^{(2)}$) are independent and have the same law as $\calC_n$ and $\calC_m$ ($\calR_n$ and $\calR_m$), respectively.
	Further, for $k\ge1$ define	 $\overline{\calC}_k = \calC_k - \bbE[\calC_k]$, and similarly  $\overline{\calC}_k^{(1)}$ and $\overline{\calC}_k^{(2)}$. Taking expectation in \eqref{decomp} and then subtracting those two relations yields
	\begin{equation*}
	\big\vert \overline{\calC}_{n+m} - (\overline{\calC}_n^{(1)} + \overline{\calC}_m^{(2)})\big\vert \le 2 \max \{G(\calR_n^{(1)},\calR_m^{(2)}), \bbE[G(\calR_n^{(1)},\calR_m^{(2)})]\}.
	\end{equation*}
	Denote  $\lVert\cdot\rVert_2=\bbE[(\cdot)^2]^{1/2}$. Clearly, $\Var(\calC_k)=\lVert\overline{\calC}_k\rVert_2^2$, $k\ge1$.
The triangle inequality and independence of $\overline{\calC}_n^{(1)}$ and $\overline{\calC}_n^{(2)}$ together with  the estimate $\bbE[G(\calR_n^{(1)},\calR_m^{(2)})] \le \Vert G(\calR_n^{(1)},\calR_m^{(2)})\Vert_2$ and Lemma \ref{lm:E_of_sum_sum_G^k} imply
	\begin{align*}
	\Vert\overline{\calC}_{n+m}\Vert_2
	& \le \big(\Vert\overline{\calC}_n^{(1)}\Vert_2^2 + \Vert\overline{\calC}_m^{(2)}\Vert_2^2\big)^{1/2} + 4\Vert G(\calR_n^{(1)},\calR_m^{(2)})\Vert_2 \\
	&\le \big(\Vert\overline{\calC}_n^{(1)}\Vert_2^2 + \Vert\overline{\calC}_m^{(2)}\Vert_2^2\big)^{1/2} + c_1 h_d(n+m),
	\end{align*}
	where in the last inequality we used $$G(\calR_n^{(1)},\calR_m^{(2)})\le G(\calR_{n+m}^{(1)},\calR_{n+m}^{(2)}).$$ 
	Consequently,  
	\begin{align*}
	\Vert\overline{\calC}_{n + m}\Vert_2^2
	\le \Vert\overline{\calC}_n\Vert_2^2 + \Vert\overline{\calC}_m\Vert_2^2 + c_2 \big(\Vert\overline{\calC}_n\Vert_2^2 + \Vert\overline{\calC}_m\Vert_2^2\big)^{1/2} h_d(n+m) + c_3h^2_d(n+m). 
	\end{align*}
	By setting $a_k=\Vert\overline{\calC}_{k}\Vert_2^2$, $k\ge1$,  
	the above relation reads 
	\begin{equation}\label{to show_1}
	a_{n+m}\le a_n+a_m+c_2 \big(\Vert\overline{\calC}_n\Vert_2^2 + \Vert\overline{\calC}_m\Vert_2^2\big)^{1/2} h_d(n+m) + c_3h_d^2(n+m).
		\end{equation}
		In the sequel we find an upper bound for the third term of the right hand side of inequality \eqref{to show_1}.  
		
We  first consider the case $d/ \alpha\geq 3$ for which $h_d(n)$ is slowly varying. 
If we  prove that
\begin{align*}
\Vert \overline{\calC}_n\Vert_2\le c\sqrt{n}\,h_d(n),\qquad n\ge1,
\end{align*}	
then by defining $b_k=c\sqrt{k}h^2_d(k)$, $k\ge1$, the assertion of the lemma will follow directly from \eqref{to show_1} and Lemma \ref{Hammersley}.
For  $k \ge 1$  we set
	\begin{equation}\label{eq:def_of_a_k}
	\alpha_k = \sup \left\{ \Vert \overline{\calC}_i\Vert_2:\, 2^k \le i \le 2^{k + 1}\right\}.
	\end{equation}
	Further, for $k \ge 2$ we take $n\ge1$ such that $2^k \le n < 2^{k + 1}$, and we set $l = \floor{n/2}$ and $m = n - l$. Here $\floor {a}$ stands for the largest integer smaller than or equal to $a\in\R$. Analogously as above we have
	\begin{equation*}\label{eq:Cor_2.1.-existance_of_limit}
	\calC_l^{(1)} + \calC_m^{(2)}  -2G(\calR_l^{(1)},\calR_m^{(2)}) \le \calC_n \le \calC_l^{(1)} + \calC_m^{(2)},
	\end{equation*}
	and we arrive at
	\begin{align}\label{eq:inter}
	\Vert\overline{\calC}_n\Vert_2
	&  \le \big(\Vert\overline{\calC}_l^{(1)}\Vert_2^2 + \Vert\overline{\calC}_m^{(2)}\Vert_2^2\big)^{1/2} + c_4 h_d(n).
	\end{align}
	 Recall that $\calC_l^{(1)}$ and $\calC_m^{(2)}$ ($\calR_l^{(1)}$ and $\calR_m^{(2)}$) are independent and have the same law as $\calC_l$ and $\calC_m$ ($\calR_l$ and $\calR_m$), respectively.
	Hence, equation \eqref{eq:inter} implies
	\begin{equation*}
	\Vert\overline{\calC}_n\Vert_2 \le 2^{1/2} \alpha_{k - 1} + c_4 h_d(n).
	\end{equation*}
	Taking supremum over $2^k \le n \le 2^{k + 1}$ yields	\begin{equation*}\label{eq:rec_relation_for_a_k}
	\alpha_k \le 2^{1/2} \alpha_{k - 1} + c_4 h_d(2^{k + 1}).
	\end{equation*}
	We next set $\beta_k = \alpha_k / h_d(2^k)$. In view of 
	\cite[Theorem 1.5.6]{BGT_book} we get that $h_d(2^{k + 1}) \le ch_d(2^k)$. This and monotonicity of $h_d(n)$ gives 
	\begin{equation*}
	\beta_k \le 2^{1/2} \beta_{k - 1} + c_5.
	\end{equation*}
	By iteration of this inequality we have $\beta_k \le c_6 2^{k/2}$ which  implies $\alpha_k \le c_6 2^{k/2} h_d(2^k)$. Finally, using definition of $\alpha_k$ and \cite[Theorem 1.5.6]{BGT_book} we obtain
$$
\lVert\overline\calC_n\rVert_2^2 \le \alpha_k^2 \le c_6^2 2^k h^2_d(2^k) \le c_7 n\, h^2_d(n),
	$$
which finishes the proof for $d/\alpha\geq 3$.

Next we consider the case $5 /2<d/\alpha<3$. We set $\Delta = d/\alpha -5/2$ and we observe that 
\begin{align*}
h_d(n) = n^{1/2-\Delta}s(n),
\end{align*}
where $s(n)=(\ell (n))^{-d}$ is a slowly varying function. If we prove
\begin{align}\label{to_show_2}
\Vert \overline{\calC}_n\Vert_2\le c n^{3\Delta /2}\,h_d(n),\quad n\ge1.
\end{align}
then by defining $b_k=c k^{3\Delta /2} h^2_d(k)$, $k\ge1$ the assertion of the lemma will again follow directly from \eqref{to show_1} and Lemma \ref{Hammersley}. 
As $h_d(n)$ is regularly varying of index $1/2-\Delta$, by \cite[Theorem 1.5.6]{BGT_book}, we have that there is $k_0\geq 1$ such that
$h_d(2^{k-1})/h_d(2^k)\leq 2^{(3\Delta -1)/2 }$ for $k\geq k_0$. 
We set again $\beta_k = \alpha_k / h_d(2^k)$.
Dividing \eqref{eq:inter} by $h_d(2^k)$ and using the fact that $h_d(n)\leq c_8 h_d(2^k)$ implies that for $k\geq k_0$
\begin{align}\label{for_k_0}
\beta_k \leq 2^{3\Delta /2}\beta_{k-1} + c_{9}.
\end{align}
By monotonicity of $h_d(n)$, there exists $M\geq 1$ such that $h_d(2^{k-1})/h_d(2^k)\leq M 2^{(3\Delta -1)/2}$, $k\geq 1$, and thus we may easily extend \eqref{for_k_0} to all $k\geq 1$. Hence by iterating \eqref{for_k_0} we  obtain
\begin{align*}
\alpha_k \leq c_{10} 2^{3\Delta\, k /2}h_d(2^k),\quad k\geq 1,
\end{align*}
with $\alpha_k $ is defined in \eqref{eq:def_of_a_k}.
We thus conclude \eqref{to_show_2} and the result follows.
\end{proof}

\section{CLT for $\{\calC_n\}_{n\ge0}$}\label{S5}

In this section, we first show
strict positivity of the limit $\sigma_d^2$ from Lemma \ref{lm:existence_of_Varn/n_limit}, and then we finally   prove  Theorem \ref{CLT}. Namely, $\sigma_d^2$ will be exactly  the variance  parameter  of the limiting normal law in  \eqref{eq:CLT}. 
To show that   $\sigma_d^2$ is strictly positive we adapt an idea from \cite{Asselah_Zd} where the simple random walk is decomposed into two independent processes. The first process is the process counting the number of double-backtracks, and the second one is the process with no double-backtracks. 
For our class of random walks we use one-step loops 
 instead of double-backtracks. To be more precise, 
we say that  $\{S_n\}_{n\ge0}$ makes a one-step loop at time $n$ if $S_n = S_{n - 1}$. Clearly, $\{S_n\}_{n\ge0}$ admits one-step loops if and only if $p_1(0)>0$. Also, when the walk makes a one-step loop, the range evidently remains unchanged.
We will first build a random walk $\{\widetilde S_n\}_{n\ge0}$ with no one-step loops, and then we will show how to construct a random walk $\{\widehat S_n\}_{n\ge0}$ starting from  $\{\widetilde S_n\}_{n\ge0}$ with (i) the same law as $\{S_n\}_{n\ge0}$, and (ii) the range process being a certain random time-change of the range process of $\{\widetilde S_n\}_{n\ge0}$.

Finally, to prove Theorem \ref{CLT} we combine Lindeberg-Feller central limit theorem (see \cite[Theorem 3.4.5]{Durrett} or Lemma \ref{feller}) and a dyadic version of the capacity decomposition formula from Lemma \ref{capacity_decomp} (see \cite[Corollary 2.1]{Asselah_Zd} or Lemma \ref{dyadic}).

\subsection*{Strict positivity of $\sigma_d^2$} \label{LOOPS}
 Assume (\textbf{A1})-(\textbf{A4}), and
let  $\{\widetilde{X}_i\}_{i\in\N}$ be a sequence of i.i.d. random variables with  distribution
\[   
\bbP(\widetilde{X}_i = x) = \frac{p_1(x)}{1 - p}\bbjedan_{\{x\neq 0\}}.
\]
Recall that $p=p_1(0)>0$ (assumption (\textbf{A4})).
Further, let $\{\widetilde{S}_n\}_{n\ge0}$ be the corresponding random walk. 
Clearly, $\{\widetilde{S}_n\}_{n\ge0}$ has no one-step loops.
We now construct a  random walk $\{\widehat{S}_n\}_{n\ge0}$ by adding an independent geometric number of one-step loops to $\{\widetilde{S}_n\}_{n\ge0}$ at each step, and which has the same law as $\{S_n\}_{n\ge0}$. Let $\{\xi_i\}_{i \ge 0}$ be a sequence of i.i.d. geometric random variables with parameter $p$ which are independent of $\{\widetilde{S}_n\}_{n\ge0}$. Recall, 
\begin{equation*}
\bbP(\xi_i = k) = p^k (1 - p), \qquad k \ge 0.
\end{equation*}
We set $N_{-1} = 0$,
\begin{equation*}
N_k = \sum_{i = 0}^k \xi_i, \qquad k\ge0,
\end{equation*}
and  define $\{\widehat{S}_n\}_{n\ge0}$ according to the following procedure: We start by setting $\widehat S_0 = 0$. Further, for  $k \ge 0$ we define
\begin{equation*}
I_k = [k + N_{k - 1} + 1, k + N_k].
\end{equation*} 
If $I_k \neq \emptyset$ then for each $i \in I_k$ we define $\widehat S_i = \widetilde{S}_k$. We next follow the path of $\{\widetilde{S}_n\}_{n\ge0}$ which means that we set $\widehat S_{k + N_k + 1} = \widetilde{S}_{k + 1}$. This construction provides a random walk $\{\widehat{S}_n\}_{n\ge0}$ with the same law as $\{S_n\}_{n\ge0}$. We also have
\begin{equation*}
\widetilde{S}_n = \widehat S_{n + N_n} = \widehat S_{n + N_{n - 1}},\qquad n\ge0,
\end{equation*} 
where the second equality holds since $n + N_{n - 1} = (n - 1) + N_{n - 1} + 1$. 
Consequently, 
\begin{equation*}
\widetilde{\calR}_n = \widehat\calR_{n + N_n} = \widehat\calR_{n + N_{n - 1}},\qquad n\ge0,
\end{equation*}
where $\{\widetilde{\calR}_n\}_{n\ge0}$ and $\{\widehat{\calR}_n\}_{n\ge0}$ are range processes of $\{\widetilde{S}_n\}_{n\ge0}$ and $\{\widehat{S}_n\}_{n\ge0}$, respectively.

We now show that $\sigma_d$ must be strictly positive.
We first establish two technical lemmas.
\begin{lemma}\label{lm:sum_sum_G_ge_sqrt_n}
	Assume (\textbf{A1})-(\textbf{A4}) and $d/\alpha> 5 /2$. Then for any  $c,c'>0$ it holds that
	\begin{equation*}
	\lim_{n\nearrow \infty}	\bbP \big( G\big(\widetilde{\calR}_n ,  \widetilde{\calR}[n, \lfloor n + c'n\rfloor] \big)   \ge c\sqrt{n} \big)
	=0,
	\end{equation*} 
	where $G(x,y)$ is the Green function of $\{S_n\}_{n\ge0}$ (or $\{\widehat{S}_n\}_{n\ge0}$).
\end{lemma}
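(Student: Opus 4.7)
The plan is to reduce the estimate to Lemma~\ref{lm:E_of_sum_sum_G^k} by first decoupling the two sets via the Markov property of $\{\widetilde{S}_n\}$, and then passing from $\{\widetilde{S}_n\}$ to $\{\widehat{S}_n\}$ (which has the same law as $\{S_n\}$) via the one-step-loop construction of Subsection~\ref{LOOPS}. Markov's inequality, together with the estimate $h_d(n)=o(\sqrt{n})$, which holds precisely when $d>5\alpha/2$, will then close the argument.

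Set $m:=\lfloor c'n\rfloor$. Applying the Markov property of $\{\widetilde{S}_n\}$ at time $n$, the increment process $\widetilde{S}^{(1)}_k:=\widetilde{S}_{n+k}-\widetilde{S}_n$, $0\le k\le m$, is an independent copy of $\{\widetilde{S}_n\}$ up to time $m$; writing $\widetilde{\calR}^{(1)}_m:=\{\widetilde{S}^{(1)}_0,\dots,\widetilde{S}^{(1)}_m\}$, we have $\widetilde{\calR}[n,n+m]\subseteq \widetilde{S}_n+\widetilde{\calR}^{(1)}_m$. Non-negativity and the symmetry $G(z)=G(-z)$ of the Green function give
\begin{equation*}
G(\widetilde{\calR}_n,\widetilde{\calR}[n,n+m])\le G(\widetilde{\calR}_n,\widetilde{S}_n+\widetilde{\calR}^{(1)}_m)=G(\widetilde{S}_n-\widetilde{\calR}_n,\widetilde{\calR}^{(1)}_m).
\end{equation*}
Reversing the order of the i.i.d.\ increments $\widetilde{X}_1,\dots,\widetilde{X}_n$ shows that $\widetilde{S}_n-\widetilde{\calR}_n\stackrel{d}{=}\widetilde{\calR}_n$, and this set is a function of $\sigma(\widetilde{S}_0,\dots,\widetilde{S}_n)$, hence independent of $\widetilde{\calR}^{(1)}_m$. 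It therefore suffices to bound $\bbP\big(G(\widetilde{\calR}^{(0)}_n,\widetilde{\calR}^{(1)}_m)\ge c\sqrt{n}\big)$ with $\widetilde{\calR}^{(0)}_n$ and $\widetilde{\calR}^{(1)}_m$ independent copies of $\widetilde{\calR}_n$ and $\widetilde{\calR}_m$.

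Next, couple each of these two independent copies with a realization of $\widehat{S}$ as in Subsection~\ref{LOOPS}: if $\widehat{S}^{(i)}$ is built from $\widetilde{S}^{(i)}$ by inserting $N^{(i)}_j:=\sum_{l=0}^{j}\xi^{(i)}_l$ one-step loops with i.i.d.\ $\mathrm{Geom}(p)$ marks, then $\widetilde{\calR}^{(i)}_j=\widehat{\calR}^{(i)}_{j+N^{(i)}_j}$. Choose $M_n:=\lceil 2n/(1-p)\rceil$, $M'_n:=\lceil 2m/(1-p)\rceil$, and let $E_n:=\{n+N^{(0)}_n\le M_n\}\cap\{m+N^{(1)}_m\le M'_n\}$. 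The strong law of large numbers yields $\bbP(E_n^c)\to 0$, and on $E_n$ monotonicity of the range gives $G(\widetilde{\calR}^{(0)}_n,\widetilde{\calR}^{(1)}_m)\le G(\widehat{\calR}^{(0)}_{M_n},\widehat{\calR}^{(1)}_{M'_n})$. Since $\{\widehat{S}^{(0)}\}$ and $\{\widehat{S}^{(1)}\}$ are independent copies of $\{S_n\}$, Lemma~\ref{lm:E_of_sum_sum_G^k} with $k=1$ (whose proof goes through unchanged with two different time horizons, using $G_j\le G_n$ for $j\le n$) and Markov's inequality give
\begin{equation*}
\bbP\big(G(\widehat{\calR}^{(0)}_{M_n},\widehat{\calR}^{(1)}_{M'_n})\ge c\sqrt{n}\big)\le \frac{C\,h_d(M_n\vee M'_n)}{c\sqrt{n}}.
\end{equation*}

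A direct case check in Lemma~\ref{lm:bounds_on_GGG} shows that $h_d(n)=o(\sqrt{n})$ precisely when $d>5\alpha/2$: for $d\ge 3\alpha$ the function $h_d$ is bounded or slowly varying, while for $2<d/\alpha<3$ one has $h_d(n)/\sqrt{n}=n^{5/2-d/\alpha}\ell(n)^{-d}\to 0$. Combined with $\bbP(E_n^c)\to 0$, the conclusion follows. The main technical obstacle is the bookkeeping of independence: the Markov decoupling at time $n$, the time-reversal identification $\widetilde{S}_n-\widetilde{\calR}_n\stackrel{d}{=}\widetilde{\calR}_n$, and the two independent realizations of the geometric-loop construction all have to fit together cleanly. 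Once these identities are in hand, the tail bound is driven purely by Lemma~\ref{lm:E_of_sum_sum_G^k} and the slow growth of $h_d$.
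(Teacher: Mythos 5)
Your proposal is correct and follows essentially the same route as the paper: transfer the problem to the walk $\{\widehat S_n\}$ via the geometric one-step-loop construction, control the loop counts so that the random time horizons are $O(n)$ with high probability, decouple the two ranges into independent copies, and finish with Markov's inequality, Lemma \ref{lm:E_of_sum_sum_G^k} and the fact that $h_d(n)=o(\sqrt{n})$ for $d>5\alpha/2$. The only (harmless) organizational difference is that you decouple at the deterministic time $n$ for $\{\widetilde S_n\}$ and then lift each piece separately with independent loop variables, whereas the paper decouples after passing to $\{\widehat S_n\}$ at the random time $n+N_{n-1}$; also note the set appearing after translation is $\widetilde\calR_n-\widetilde S_n$ rather than $\widetilde S_n-\widetilde\calR_n$, which changes nothing since both have the law of $\widetilde\calR_n$ by time reversal and symmetry.
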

\begin{proof}
	Let $M_n$ be the number of one-step loops added in the time interval $[n, n + c'n]$, that is,
	\begin{align*}
	M_n = \xi_n +\ldots + \xi_{\floor{n + c'n}}.
	\end{align*}
	Since $ \widetilde{S}_{\floor{n + c'n}} = \widehat S_{\floor{n + c'n} + N_{\floor{n + c'n}}}$ and
	\begin{equation*}
	N_{\floor{n + c'n}} = \sum_{i = 0}^{\floor{n + c'n}} \xi_i = \sum_{i = 0}^{n - 1} \xi_i + \sum_{i = n}^{n + c'n} \xi_i = N_{n - 1} + M_n,
	\end{equation*}
	we have $\widetilde{S}_{\lfloor n + c'n\rfloor} = \widehat S_{\lfloor n + c'n\rfloor + N_{n - 1} + M_n}$. This together with $\widetilde{S}_n = \widehat S_{n + N_{n - 1}}$, $n\ge0$, implies
	\begin{equation*}
	\widetilde{\calR}[n, \lfloor n + c'n\rfloor] = \widehat \calR[n + N_{n - 1}, \lfloor n + c'n\rfloor + N_{n - 1} + M_n].
	\end{equation*}
	Therefore
	\begin{align*}
	\bbP \big( &G\big(\widetilde{\calR}_n ,  \widetilde{\calR}[n, \lfloor n + c'n\rfloor] \big)   \ge c\sqrt{n} \big)\\
	&\leq \bbP \Big( G\big(\widehat \calR_{n + N_{n - 1}} , \, \widehat\calR[n + N_{n - 1}, \lfloor(1 + c' + c_1)n\rfloor + N_{n - 1}] \big)   \ge c\sqrt{n} \Big)
	+ \bbP(M_n \ge c_1n),
	\end{align*}
	where $c_1 > 0$ is a constant that we specify. For that, notice that there exists a constant $c_2 > 0$ such that $\bbE[M_n] \le c_2n$, $n\ge0$. Set $c_1 = c_2 + \varepsilon$ for some $\varepsilon > 0$. Then, by Chebyshev's inequality we have
	\begin{equation*}
	\bbP(M_n \ge c_1n) = \bbP(M_n - c_2n \ge \varepsilon n)  \le \frac{\Var(M_n)}{\varepsilon^2 n^2} \xrightarrow{n \nearrow \infty} 0.
	\end{equation*}
	To bound the first term of the penultimate estimate we observe that
 $G(x - a, y - a) = G(x, y)$, $x, y, a \in \bbZ^d$,  
	 and that the two random variables
\begin{align*}
\widehat\calR_{n + N_{n - 1}} - \widehat S_{n + N_{n - 1}}\quad \mathrm{and}\quad 
\widehat \calR[n + N_{n - 1}, \lfloor(1 + c' + c_1)n\rfloor + N_{n - 1}] - \widehat S_{n + N_{n - 1}}
\end{align*}	 
are independent. Thus, instead of the second random set we can write $\calR'_{\lfloor(c' + c_1)n\rfloor}$, where $\{\calR'_n\}_{n\ge0}$ is the range process of a random walk that is an independent copy of $\{\widehat{S}_n\}_{n\ge0}$. We obtain
	\begin{align*}
	\bbP \big( G\big(\widehat\calR_{n + N_{n - 1}}-\widehat S_{n + N_{n - 1}} &,  \calR'_{\lfloor(c' + c_1)n\rfloor}\big) \ge c\sqrt{n} \big)\\
	&\leq 
	\bbP \big( G\big(\widehat\calR_{\lfloor(1 + c_3)n\rfloor} ,  \calR'_{\lfloor(c' + c_1)n\rfloor} \big) \ge c\sqrt{n} \big)
	+ \bbP(N_{n - 1} \ge c_3 n),
	\end{align*}
	where the constant $c_3$ is defined as above to make $\bbP(N_{n - 1} \ge c_3 n)$ tending to zero as $n$ goes to infinity. We finally set $c_4 = \max\{\lfloor1 + c_3\rfloor, \lfloor c' + c_1\rfloor\}$ and we apply the Markov inequality, Lemma \ref{lm:E_of_sum_sum_G^k} and \cite[Theorem 1.5.6]{BGT_book} to get
	\begin{align*}
	\bbP \big( G\big(\widehat\calR_{\lfloor(1 + c_3)n\rfloor} ,  \calR'_{\lfloor(c' + c_1)n\rfloor} \big) \ge c\sqrt{n} \big)
	\le \big(c \sqrt{n}\big)^{-1}\bbE\big[ G\big( \widehat\calR_{c_4 n}, \calR'_{c_4n} \big)\big] 
	\le c_5 n^{-1/2} h_d(n).
	\end{align*} 	 
	 Since for $d/\alpha>5 /2$ the index of $h_d(n)$ is less than $1/2$,
	 the last term tends to zero and the result follows.
\end{proof}

In what follows, we use the notation $\widetilde\calC[m, n] = \Capa(\widetilde\calR[m, n])$, $\widehat{\calC}[m, n] = \Capa(\widehat{\calR}[m, n])$, $\widetilde\calC_n = \Capa(\widetilde\calR_n)$ and  $\widehat{\calC}_n = \Capa(\widehat{\calR}_n)$, $m,n\ge0$.

\begin{lemma}\label{lm:lln_for_C_tilde}
	Assume (\textbf{A4}) and that $\{S_n\}_{n\ge0}$ (or $\{\widehat S_n\}_{n\ge0}$) is transient.
	Then for any $k\geq 0$
	\begin{equation*}
	\lim_{n\nearrow \infty} \frac{\widetilde{C}[k, k + n]}{n}
	=\frac{\mu_d}{1 - p}\qquad \bbP\text{-a.s.},
	\end{equation*}
	where $\mu_d$ is the limit from \eqref{Jain_Orey}.
\end{lemma}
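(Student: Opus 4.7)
The plan is to exploit the interleaving construction of $\{\widehat S_n\}_{n\ge0}$ from $\{\widetilde S_n\}_{n\ge0}$ given just before the lemma, combined with the SLLN \eqref{Jain_Orey} applied to $\{\widehat S_n\}_{n\ge0}$. This is permitted since $\{\widehat S_n\}_{n\ge0}$ has the same law as $\{S_n\}_{n\ge0}$ and the latter is transient by hypothesis, so $\widehat{\calC}_m/m \to \mu_d$ a.s.

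First I would treat the case $k=0$. Recall the identity $\widetilde{\calR}_n = \widehat{\calR}_{n + N_{n-1}}$ established right above the lemma, which immediately gives $\widetilde{\calC}_n = \widehat{\calC}_{n + N_{n-1}}$. I then write
$$\frac{\widetilde{\calC}_n}{n} = \frac{\widehat{\calC}_{n + N_{n-1}}}{n + N_{n-1}} \cdot \frac{n + N_{n-1}}{n}.$$
The first factor tends to $\mu_d$ a.s.: the random index $n + N_{n-1}$ is non-decreasing and bounded below by $n$, hence diverges to $\infty$ a.s., and the a.s. limit of $\widehat{\calC}_m/m$ transfers along this random subsequence. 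The second factor converges a.s. to $1/(1-p)$ by the classical SLLN for the i.i.d. geometric sequence $\{\xi_i\}_{i\ge0}$, whose common mean is $p/(1-p)$, yielding $N_{n-1}/n \to p/(1-p)$ a.s. Multiplying, $\widetilde{\calC}_n/n \to \mu_d/(1-p)$ a.s.

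For general $k\ge 0$ I would reduce to the $k=0$ case by a Markov/translation argument. By the Markov property, the shifted process $\{\widetilde{S}_{k+j} - \widetilde{S}_k\}_{j\ge0}$ is a random walk with the same law as $\{\widetilde{S}_j\}_{j\ge0}$, and since capacity is translation invariant,
$$\widetilde{\calC}[k, k+n] = \Capa\bigl(\{\widetilde{S}_{k+j} - \widetilde{S}_k : 0 \le j \le n\}\bigr).$$
Applying the $k=0$ conclusion to this shifted walk gives the stated limit.

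I do not expect a serious obstacle; this is a clean time-change/loop-compression argument. The only point of minor care is confirming that the a.s. convergence of $\widehat{\calC}_m/m$ passes through the random subsequence $m = n + N_{n-1}$, which is automatic from $n + N_{n-1} \ge n \to \infty$ a.s. Everything else reduces to the SLLN for i.i.d. sums and the known SLLN \eqref{Jain_Orey}.
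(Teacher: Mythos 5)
Your proof is correct and follows essentially the same route as the paper: both rest on the time-change identity $\widetilde{\calR}_n=\widehat\calR_{n+N_{n-1}}$, the SLLN for the geometric sums $N$, and the Jain--Orey limit \eqref{Jain_Orey} applied to $\{\widehat S_n\}_{n\ge0}$. The only (immaterial) difference is that you treat $k=0$ and then shift by equality in law, whereas the paper handles general $k$ directly via $\widetilde{\calC}[k,k+n]=\widehat\calC[k+N_{k-1},\,k+n+N_{k-1}+N[k,k+n]]$.
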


\begin{proof}
	Since $\widetilde{S}_k = \widehat S_{k + N_{k - 1}}$ and $\widetilde{S}_{k + n} = \widehat S_{k + n + N_{k + n}}$,  we have
	\begin{equation*}
	\widetilde{\calR}[k, k + n] = \widehat\calR[k + N_{k - 1}, k + n + N_{k + n}],\qquad n,k\ge0.
	\end{equation*}
	Observe that $N_{k + n} = N_{k - 1} + N[k, k + n]$, where $N[k, k+n] = \sum_{i = k}^{k + n} \xi_i$. 
	Hence
	\begin{align*}
	\widetilde{\calC}[k,\,  k + n] = \widehat\calC\big[ k + N_{k - 1}, \, k + n + N_{k - 1} + N[k, k + n]\big].
	\end{align*}
	By the strong law of large numbers
	\begin{equation*}
	\lim_{n\nearrow \infty} \frac{N[k, k+n]}{n} = \frac{p}{1 - p}\qquad \bbP\text{-a.s.}
	\end{equation*}
	Therefore
	\begin{equation*}
	\lim_{n\nearrow \infty} \frac{\widetilde{\calC}[k, k + n]}{n} 
	=\frac{\mu_d}{1 - p}\qquad \bbP\text{-a.s.}
	\end{equation*}
	as desired.
\end{proof}

We finally prove that $\sigma_d$ is strictly positive. 

\begin{lemma}\label{positivity} Assume (\textbf{A1})-(\textbf{A4}) and
	 $d/\alpha > 5 /2$. Then $\sigma_d^2>0$.
\end{lemma}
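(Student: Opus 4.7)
The plan is to exploit the loop decomposition already set up in Subsection \ref{LOOPS}: we use the identity $\calC_n \stackrel{d}{=} \widehat{\calC}_n = \widetilde{\calC}_{K_n}$, where $K_n$ counts the non-loop steps of $\widehat{S}$ during $[0,n]$. Because each step of $\widehat{S}$ is a one-step loop with probability $p=p_1(0)$ independently of $\widetilde{S}$, $K_n$ has the $\mathrm{Binomial}(n,1-p)$ distribution and is independent of $\widetilde{S}$; in particular $\Var(K_n)=np(1-p)$. I would then introduce a second non-loop count $J_n$ generated by an independent family of geometric random variables, so that $(K_n,J_n)$ are i.i.d.\ and both independent of $\widetilde{S}$. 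Conditioning on $\widetilde{S}$ then makes $\widetilde{\calC}_{K_n}$ and $\widetilde{\calC}_{J_n}$ conditionally i.i.d., and the conditional-variance formula together with the polarization identity yields
\[
\Var(\calC_n) \;\geq\; \bbE\bigl[\Var(\widetilde{\calC}_{K_n}\mid\widetilde{S})\bigr] \;=\; \tfrac{1}{2}\,\bbE\bigl[(\widetilde{\calC}_{K_n} - \widetilde{\calC}_{J_n})^{2}\bigr].
\]
It is therefore enough to produce $c>0$ such that $\bbE[(\widetilde{\calC}_{K_n}-\widetilde{\calC}_{J_n})^{2}] \geq c\,n$ for all large $n$.

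Fix $0<a<b$ and set $A_n = \{J_n \leq K_n,\; a\sqrt{n}\leq K_n-J_n\leq b\sqrt{n}\}$. Since $K_n-J_n$ has mean $0$ and variance $2np(1-p)$, the CLT gives $\bbP(A_n)\geq p_0>0$ uniformly for $n$ large. On $A_n$, the capacity decomposition (Lemma \ref{capacity_decomp}) applied to $A=\widetilde{\calR}_{J_n}$ and $B=\widetilde{\calR}[J_n,K_n]$ yields
\[
\widetilde{\calC}_{K_n}-\widetilde{\calC}_{J_n} \;\geq\; \Capa(\widetilde{\calR}[J_n,K_n]) \;-\; 2\,G\bigl(\widetilde{\calR}_{J_n},\widetilde{\calR}[J_n,K_n]\bigr).
\]
For the first term, the strong Markov property of $\widetilde{S}$ at the (random but $\widetilde{S}$-independent) time $J_n$, together with translation invariance of $\Capa$, implies that, conditional on $(J_n,K_n)$, $\Capa(\widetilde{\calR}[J_n,K_n])$ is distributed as $\widetilde{\calC}_{K_n-J_n}$. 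Lemma \ref{lm:lln_for_C_tilde} (with $k=0$) supplies the SLLN $\widetilde{\calC}_\ell/\ell \to \mu_d/(1-p) > 0$, and the corresponding convergence in probability is uniform for $\ell\in[a\sqrt{n},b\sqrt{n}]$ as $n\to\infty$. Hence, taking any $c_1<a\mu_d/(1-p)$, one has $\Capa(\widetilde{\calR}[J_n,K_n]) \geq c_1\sqrt{n}$ on $A_n$ with probability tending to $1$.

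For the error term, I would apply the strong Markov property at $J_n$ to write $\widetilde{\calR}[J_n,K_n]=\widetilde{S}_{J_n}+\widetilde{\calR}'_{K_n-J_n}$ for an independent range $\widetilde{\calR}'$, and then use translation invariance of $G$ together with the symmetry hypothesis (\textbf{A2}) (via time reversal) to conclude $\widetilde{\calR}_{J_n}-\widetilde{S}_{J_n}\stackrel{d}{=}\widetilde{\calR}_{J_n}$ independently of $\widetilde{\calR}'_{K_n-J_n}$. Consequently,
\[
\bbE\bigl[G(\widetilde{\calR}_{J_n},\widetilde{\calR}[J_n,K_n])\bigr] \;\leq\; \bbE\bigl[G(\widetilde{\calR}_n,\widetilde{\calR}'_n)\bigr] \;\leq\; C\,h_d(n),
\]
by monotonicity of $G$ and Lemma \ref{lm:E_of_sum_sum_G^k} with $k=1$. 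Inspection of the three cases in Lemma \ref{lm:bounds_on_GGG} shows that the hypothesis $d>5\alpha/2$ is exactly what forces $h_d(n)=o(\sqrt{n})$, so Markov's inequality delivers $G(\widetilde{\calR}_{J_n},\widetilde{\calR}[J_n,K_n]) = o_{\bbP}(\sqrt{n})$. Combining the two estimates, $\widetilde{\calC}_{K_n}-\widetilde{\calC}_{J_n} \geq (c_1/2)\sqrt{n}$ on a subset of $A_n$ of probability at least $p_0/2$ for $n$ large, whence $\bbE[(\widetilde{\calC}_{K_n}-\widetilde{\calC}_{J_n})^{2}] \geq c_2\,n$ and $\sigma_d^2 \geq c_2/2 > 0$.

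I expect the main technical obstacle to be the decoupling needed for the $G$-term: $\widetilde{\calR}_{J_n}$ and $\widetilde{\calR}[J_n,K_n]$ share the point $\widetilde{S}_{J_n}$ and are not independent, so Lemma \ref{lm:E_of_sum_sum_G^k} does not apply verbatim. It is precisely the combined use of the strong Markov property, translation invariance of $G$, and the symmetry in (\textbf{A2}) (so that the time-reversed walk of length $J_n$ has the same law as the forward walk) that reduces the problem to an estimate on two genuinely independent ranges. Everything else is a routine CLT-plus-SLLN argument, and the dimension restriction $d>5\alpha/2$ enters the proof at exactly this point through $h_d(n)/\sqrt{n}\to 0$.
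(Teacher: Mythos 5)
Your argument is sound and reaches the conclusion by a genuinely different route from the paper. The paper does not use the law of total variance: it fixes deterministic times $i_n=\floor{(1-p)(n-A\sqrt{n})}$, $j_n=\floor{(1-p)n}$, $k_n=\floor{(1-p)(n+A\sqrt{n})}$, splits into the two cases $\bbP(\widetilde\calC_{j_n}\ge\bbE[\widehat\calC_n])\ge 1/2$ and $\bbP(\widetilde\calC_{j_n}\le\bbE[\widehat\calC_n])\ge 1/2$, and in each case uses CLT events for the loop counts $N_{i_n-1}$, $N_{k_n-1}$ to show that $\widehat\calC_n$ deviates from its mean by order $\sqrt{n}$ with probability bounded below, concluding by Chebyshev. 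Your conditional-variance/polarization identity
\[
\Var(\calC_n)\;\ge\;\bbE\bigl[\Var(\widetilde\calC_{K_n}\mid\widetilde S)\bigr]\;=\;\tfrac12\,\bbE\bigl[(\widetilde\calC_{K_n}-\widetilde\calC_{J_n})^2\bigr]
\]
is correct ($K_n$ is indeed $\mathrm{Binomial}(n,1-p)$ and independent of $\widetilde S$ by the standard splitting of i.i.d.\ increments into the loop indicators and the non-zero values), and it eliminates the paper's two-case analysis and the deterministic bound $\widehat\calC_{n+2\sqrt n}\le\widehat\calC_n+2\sqrt n$; both proofs ultimately rest on the same mechanism, namely that order-$\sqrt n$ fluctuations of the loop count translate, via the slope $\mu_d/(1-p)$ from Lemma \ref{lm:lln_for_C_tilde}, into order-$\sqrt n$ fluctuations of the capacity.

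One step needs more care than you give it: the bound $\bbE[G(\widetilde\calR_n,\widetilde\calR'_n)]\le C h_d(n)$ is not a verbatim application of Lemma \ref{lm:E_of_sum_sum_G^k}, and the obstacle is not only the decoupling you flag. That lemma is proved for the ranges of $S$ paired with the Green function of $S$ (its proof uses Chapman--Kolmogorov for transition kernels of one and the same walk), whereas here the ranges belong to the loop-erased walk $\widetilde S$ while $G$ remains the Green function of $S$, as dictated by Lemma \ref{capacity_decomp}. This is exactly the mismatch that the paper's Lemma \ref{lm:sum_sum_G_ge_sqrt_n} is designed to absorb, by converting $\widetilde\calR$ back into $\widehat\calR$ through the time change and controlling $N_{n-1}$ and the added loops by Chebyshev. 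You should either condition on $(J_n,K_n)$ and invoke Lemma \ref{lm:sum_sum_G_ge_sqrt_n} (on $A_n$ one has $K_n-J_n\le b\sqrt n\le c'J_n$ eventually, since $J_n\sim(1-p)n$), or observe that $\widetilde S$ itself satisfies (\textbf{A1})--(\textbf{A3}) and that $G=\widetilde G/(1-p)$, so that Lemma \ref{lm:E_of_sum_sum_G^k} applied to $\widetilde S$ with its own Green function yields the stated bound up to a constant. Either repair is routine, so the gap is not fatal, but as written the citation of Lemma \ref{lm:E_of_sum_sum_G^k} does not cover the quantity you are estimating.
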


\begin{proof}
	We define three sequences
	\begin{align*}
	i_n = \floor{(1 - p)(n - A\sqrt{n})},\quad j_n = \floor{(1 - p)n},\quad 
	k_n = \floor{(1 - p)(n + A\sqrt{n})},\qquad n\ge1,
	\end{align*}
	for a constant $A>0$ which will be specified later.  Lemma \ref{lm:lln_for_C_tilde} implies
	\begin{equation*}
	\lim_{n\nearrow \infty} \frac{\widetilde{\calC}[j_n, k_n]}{\sqrt{n}} = A\, \mu_d\qquad \bbP\text{-a.s.}
	\end{equation*}
	Thus, for $n$ large enough
	\begin{equation}\label{eq:cap-jn-kn}
	\bbP \big(  \widetilde{C}[j_n, k_n] \ge 3A \mu_d \sqrt{n} /4\big) \ge \frac{3}{4}.
	\end{equation}
	Similarly, we show that for $n$ large enough
	\begin{equation}\label{eq:cap-in-jn}
	\bbP \big(\widetilde{C}[i_n, j_n] \ge 3 A \mu_d \sqrt{n} /4 \big) \ge \frac{3}{4}.
	\end{equation}
	By Lemma \ref{lm:sum_sum_G_ge_sqrt_n} we get for $n$ large enough
	\begin{equation}\label{eq:G-jn-kn}
	\bbP\Big(G\big( \widetilde{\calR}[0, j_n] ,  \widetilde{\calR}[j_n, k_n]\big)   >  A \mu_d \sqrt{n}/8\Big) \le \frac{1}{8}
	\end{equation}
	and
	\begin{equation}\label{eq:G-in-jn}
	\bbP\Big(G \big( \widetilde{\calR}[0, i_n],  \widetilde{\calR}[i_n, j_n]\big)  > A \mu_d \sqrt{n}/8 \Big) \le \frac{1}{8}.
	\end{equation}
	We introduce the following events
	\begin{align*}
	B_n & = \vit{\frac{N_{i_n - 1} - \bbE[N_{i_n - 1}]}{\sqrt{n}} \in [A + 1, A + 2]},\\
	D_n & = \vit{\frac{N_{k_n - 1} - \bbE[N_{k_n - 1}]}{\sqrt{n}} \in [1 - A, 2 - A]}.
	\end{align*}
	By the central limit theorem, there exists a constant $c_A > 0$ such that 
	$
	\bbP(B_n) \ge c_A$ and $\bbP(D_n) \ge c_A$	
	for $n$ large enough. We distinguish between two cases: 
	\begin{itemize}
		\item [(i)] $\bbP(\widetilde{\calC}_{j_n} \ge \bbE[\widehat\calC_n]) \ge 1/2$;

		\item[(ii)]$ \bbP(\widetilde{\calC}_{j_n} \le \bbE[\widehat\calC_n]) \ge 1/2.$
	\end{itemize}
	We first study case (i). By Lemma \ref{capacity_decomp} we have
	\begin{equation*}
	\widetilde{\calC}[0, k_n] \ge \widetilde{\calC}[0, j_n] + \widetilde{\calC}[j_n, k_n] - 2 G\big( \widetilde{\calR}[0, j_n] , \widetilde{\calR}[j_n, k_n]\big) .
	\end{equation*}
	We thus obtain
	\begin{align*}
	\bbP \Big(\widetilde{\calC}[0, k_n] \ge \bbE[\widehat\calC_n] + A \mu_d \sqrt{n}/2 \Big)
	& \ge \bbP\Big( \widetilde{\calC}[0, j_n] \ge \bbE[\widehat\calC_n],\, \widetilde{\calC}[j_n, k_n] \ge 3 A \mu_d \sqrt{n}/4 \Big) \nonumber \\
	& \quad - \bbP\Big(G\big( \widetilde{\calR}[0, j_n] , \widetilde{\calR}[j_n, k_n]\big) >  A \mu_d \sqrt{n}/8 \Big).
	\end{align*}
	In view of the assumption, space homogeneity of the capacity and \eqref{eq:cap-jn-kn} we have that
	\begin{align*}
	&\bbP\Big(\widetilde{\calC}[0, j_n] \ge \bbE[\widehat\calC_n], \widetilde{\calC}[j_n, k_n] \ge 3 A \mu_d \sqrt{n}/4 \Big)\\
&=	\bbP\Big(\mathrm{Cap}(\widetilde\calR_{j_n}-\widetilde S_{j_n}) \ge \bbE[\widehat\calC_n], \mathrm{Cap}(\widetilde{\calR}[j_n, k_n]-\widetilde S_{j_n}) \ge 3 A \mu_d \sqrt{n}/4 \Big)\\
&=\bbP\Big(\mathrm{Cap}(\widetilde\calR_{j_n}-\widetilde S_{j_n}) \ge \bbE[\widehat\calC_n]\Big) \bbP\Big(\mathrm{Cap}(\widetilde{\calR}[j_n, k_n]-\widetilde S_{j_n}) \ge 3 A \mu_d \sqrt{n}/4 \Big)\\
&=\bbP\Big(\widetilde{\calC}[0, j_n] \ge \bbE[\widehat\calC_n]\Big)\bbP\Big( \widetilde{\calC}[j_n, k_n] \ge 3 A \mu_d \sqrt{n}/4 \Big)	\ge \frac{3}{8}.
	\end{align*} 
	This  together with \eqref{eq:G-jn-kn} implies
	\begin{equation*}
	\bbP\Big(\widetilde{\calC}[0, k_n] \ge \bbE[\widehat\calC_n] + \frac{1}{2} A \mu_d \sqrt{n}\Big) \ge \frac{1}{4}.
	\end{equation*} 
	By independence of $\{N_n\}_{n\ge-1}$ and  $\{\widetilde{S}_n\}_{n\ge0}$ we get
	\begin{equation*}
	\bbP \Big(\widetilde{\calC}[0, k_n] \ge \bbE[\widehat\calC_n] + \frac{1}{2} A \mu_d \sqrt{n}, D_n\Big) \ge \frac{c_A}{4}.
	\end{equation*} 
	We next observe that on $D_n$ we have $k_n + N_{k_n - 1} \in [n, n + 2\sqrt{n}]$.  
	We also recall that 
	\begin{align*}
	\widetilde{\calR}[0, k_n] = \widehat\calR[0, k_n + N_{k_n - 1}],
	\end{align*}	
	and whence
	\begin{equation*}
	\bbP \Big(\exists\, m \le 2\sqrt{n} : \widehat\calC[0, n + m] \ge \bbE[\widehat\calC_n] + \frac{1}{2} A \mu_d \sqrt{n}\Big) \ge \frac{c_A}{4}.
	\end{equation*} 
	Since $\{\widehat\calC_n\}_{n\ge0}$ is clearly increasing in $n$, we deduce that
	\begin{equation*}
	\bbP \Big(\widehat\calC[0, n + 2\sqrt{n}] \ge \bbE[\widehat\calC_n] + \frac{1}{2} A \mu_d \sqrt{n}\Big) \ge \frac{c_A}{4}.
	\end{equation*} 
	and, finally, the deterministic bound $\widehat\calC_{n + 2\sqrt{n}} \le \widehat\calC_n + 2\sqrt{n}$ yields
	\begin{equation*}
	\bbP \Big(\widehat\calC_n \ge \bbE[\widehat\calC_n] + ( A \mu_d/2 - 2) \sqrt{n}\Big) \ge \frac{c_A}{4}.
	\end{equation*} 
	Choosing $A$ large enough such that $A\mu_d/2 - 2 > 0$ and applying the  Chebyshev's inequality shows that in case (i) we have
	\begin{equation*}
	\Var(\calC_n) =\Var(\widehat\calC_n)\ge cn,
	\end{equation*}
	as desired.

	In case (ii) we proceed similarly. By Lemma \ref{capacity_decomp}, we have
	\begin{equation*}
	\widetilde{\calC}[0, i_n] \le \widetilde{\calC}[0, j_n] - \widetilde{\calC}[i_n, j_n] + 2 G\big( \widetilde{\calR}[0, i_n], \widetilde{\calR}[i_n, j_n]\big) .
	\end{equation*}
	Next, equations \eqref{eq:cap-in-jn}, \eqref{eq:G-in-jn} and the fact that $\bbP(B_n) \ge c_A $ imply
	\begin{equation*}
	\bbP\Big( \widetilde{\calC}[0, i_n] \le \bbE[\widehat\calC_n] - \frac{1}{2} A \mu_d \sqrt{n}, B_n\Big) \ge \frac{c_A}{4}.
	\end{equation*} 
	On $B_n$ we have $i_n + N_{i_n - 1} \in [n, n + 2\sqrt{n}]$ and it follows that
	\begin{equation*}
	\bbP \Big(\exists\, m \le 2\sqrt{n} : \widehat\calC[0, n + m] \le \bbE[\widehat\calC_n] - \frac{1}{2} A \mu_d \sqrt{n}\Big) \ge \frac{c_A}{4}.
	\end{equation*}
	We thus finally conclude that
	\begin{equation*}
	\bbP \Big(\widehat\calC_n \le \bbE[\widehat\calC_n] - \frac{1}{2} A \mu_d \sqrt{n}\Big) \ge \frac{c_A}{4}
	\end{equation*} 
	and an application of the Chebyshev's inequality finishes the proof.
\end{proof}

\subsection*{Proof of Theorem \ref{CLT}}

We start with the following technical lemma.

\begin{lemma}\label{lm:norm_4_of_Cn}
	Assume (\textbf{A1})-(\textbf{A3}) and $d/\alpha > 5 /2$.   Then there is a constant $C > 0$ such that 
	\begin{equation*}
	\bbE\big[  \overline{\calC}_n^4 \big] \le Cn^2,\qquad n\ge1,
	\end{equation*} where $\overline\calC_n=\calC_n-\bbE[\calC_n].$
\end{lemma}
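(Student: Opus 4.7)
The plan is to mimic the dyadic argument of Lemma \ref{lm:existence_of_Varn/n_limit}, but now working in $L^4$ instead of $L^2$. The crucial new input is the $L^2$ bound $\Var(\calC_n) \le C_0 n$ supplied by Lemma \ref{lm:existence_of_Varn/n_limit}, which will control the cross-term produced by the fourth-moment expansion.

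First, I apply Lemma \ref{capacity_decomp} to the shifted decomposition $\calR_n - S_l$ with $l = \lfloor n/2 \rfloor$ and $m = n - l$ to obtain
\begin{equation*}
\calC_l^{(1)} + \calC_m^{(2)} - 2G(\calR_l^{(1)}, \calR_m^{(2)}) \le \calC_n \le \calC_l^{(1)} + \calC_m^{(2)},
\end{equation*}
where $\calC_l^{(1)}, \calC_m^{(2)}$ (resp.\ $\calR_l^{(1)}, \calR_m^{(2)}$) are independent with the same law as $\calC_l, \calC_m$ (resp.\ $\calR_l, \calR_m$). Subtracting expectations and applying the $L^4$ triangle inequality yields
\begin{equation*}
\Vert \overline\calC_n \Vert_4 \le \Vert \overline\calC_l^{(1)} + \overline\calC_m^{(2)} \Vert_4 + 4\,\Vert G(\calR_l^{(1)}, \calR_m^{(2)}) \Vert_4.
\end{equation*}
For the first term, independence and the mean-zero property give
\begin{equation*}
\bbE[(\overline\calC_l^{(1)} + \overline\calC_m^{(2)})^4] = \bbE[\overline\calC_l^4] + \bbE[\overline\calC_m^4] + 6\,\Var(\calC_l)\,\Var(\calC_m),
\end{equation*}
and Lemma \ref{lm:existence_of_Varn/n_limit} bounds the cross-term by $6 C_0^2 lm \le 2 C_0^2 n^2$. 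For the error term, the monotonicity $G(\calR_l^{(1)}, \calR_m^{(2)}) \le G(\calR_n^{(1)}, \calR_n^{(2)})$ combined with Lemma \ref{lm:E_of_sum_sum_G^k} (with $k=4$) gives $\Vert G(\calR_l^{(1)}, \calR_m^{(2)}) \Vert_4 \le C h_d(n)$, and since $d > 5\alpha/2$ the function $h_d$ is either slowly varying or regularly varying of index $1/2 - \Delta$ with $\Delta > 0$, so $h_d(n) = O(\sqrt{n})$.

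To close the recursion I define $\alpha_k = \sup\{\Vert \overline\calC_i \Vert_4 : 2^k \le i \le 2^{k+1}\}$ as in Lemma \ref{lm:existence_of_Varn/n_limit}. Using subadditivity of $x \mapsto x^{1/4}$ on $[0,\infty)$, the bounds above combine to give, for $n \in [2^k, 2^{k+1}]$,
\begin{equation*}
\Vert \overline\calC_n \Vert_4 \le (2\alpha_{k-1}^4 + 2 C_0^2 n^2)^{1/4} + C h_d(n) \le 2^{1/4}\alpha_{k-1} + C_1 \sqrt{n},
\end{equation*}
hence $\alpha_k \le 2^{1/4}\alpha_{k-1} + C_2\, 2^{k/2}$. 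Setting $\beta_k = \alpha_k/2^{k/2}$ turns this into $\beta_k \le 2^{-1/4}\beta_{k-1} + C_3$, which iterates to a bounded sequence because $2^{-1/4} < 1$. Thus $\alpha_k \le C\, 2^{k/2}$, i.e.\ $\Vert \overline\calC_n \Vert_4 \le C\sqrt{n}$, and raising to the fourth power yields the claim. (The base case is trivial since $\calC_n \le n+1$ deterministically, so all moments are finite.)

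The main obstacle is the cross-term $6\,\Var(\calC_l)\Var(\calC_m)$: a naive application of the $L^4$ triangle inequality to $\overline\calC_l^{(1)} + \overline\calC_m^{(2)}$ would produce a factor $2^{1/2}$ in the dyadic recursion, exactly at the critical scale $2^{k/2}$ and therefore useless. The point of expanding the fourth power explicitly and exploiting independence together with the vanishing of the means is to extract the strictly subcritical factor $2^{1/4}$, which after normalization by $2^{k/2}$ becomes the contraction constant $2^{-1/4} < 1$. This is the only place in the argument where the previously established variance estimate $\Var(\calC_n) \le C_0 n$ is essential.
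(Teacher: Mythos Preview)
Your proof is correct and follows essentially the same approach as the paper: the same dyadic decomposition, the same $L^4$ expansion exploiting independence and centering, the same invocation of Lemma~\ref{lm:existence_of_Varn/n_limit} for the cross-term and Lemma~\ref{lm:E_of_sum_sum_G^k} for the error term, and the same recursion $\alpha_k \le 2^{1/4}\alpha_{k-1} + C\,2^{k/2}$ solved via $\beta_k = 2^{-k/2}\alpha_k$. Your concluding remark about why the explicit fourth-moment expansion (rather than the triangle inequality alone) is needed to obtain the subcritical factor $2^{1/4}$ is exactly the point of the argument.
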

\begin{proof}
	The proof is similar to that of Lemma \ref{lm:existence_of_Varn/n_limit}. For $k \ge 1$ we set
	\begin{equation*}
	\alpha_k = \sup \left\{  \Vert \overline{\calC}_n\Vert_4:\, 2^k \le n \le 2^{k + 1}\right\},
	\end{equation*}
	where $\lVert\cdot\rVert_4=\bbE[(\cdot)^4]^{1/4}.$
	For $k \ge 2$ we take $2^k \le n < 2^{k + 1}$ and we set $l = \floor{n/2}$ and $m = n - l$. Using Lemma \ref{capacity_decomp}, as in the proof of Lemma \ref{lm:existence_of_Varn/n_limit}, we obtain
	\begin{equation}\label{eq:norm4_Cn_bdd}
	\Vert\overline{\calC}_n\Vert_4 \le \Vert\overline{\calC}^{(1)}_{l} + \overline{\calC}^{(2)}_{m}\Vert_4 + 4\Vert G(\calR^{(1)}_l,\calR^{(2)}_m) \Vert_4,
	\end{equation}
	where again $\calC_l^{(1)}$ and $\calC_m^{(2)}$ ($\calR_l^{(1)}$ and $\calR_m^{(2)}$) are independent and have the same law as $\calC_l$ and $\calC_m$ ($\calR_l$ and $\calR_m$), respectively.
	We observe that
	\begin{align*}
	\bbE\left[\left(\overline{\calC}^{(1)}_{l} + \overline{\calC}^{(2)}_{m}\right)^4\right]
		&=  \bbE\left[\left(\overline{\calC}^{(1)}_{l}\right)^4\right] + \bbE\left[\left(\overline{\calC}^{(2)}_{m}\right)^4\right] + 6 \bbE\left[\left(\overline{\calC}^{(1)}_{l}\right)^2\right] \bbE\left[\left(\overline{\calC}^{(2)}_{m}\right)^2\right],
	\end{align*}
	where we used the fact that $\overline{\calC}^{(1)}_{l}$ and $\overline{\calC}^{(2)}_{m}$ are two independent and centered random variables. From Lemma \ref{lm:existence_of_Varn/n_limit}   we have
	\begin{equation*}
	\bbE\left[\left(\overline{\calC}^{(1)}_{l}\right)^2\right] \bbE\left[\left(\overline{\calC}^{(2)}_{m}\right)^2\right]\le c_1n^2,
	\end{equation*}
	whereas, by Lemma \ref{lm:E_of_sum_sum_G^k} and the fact that $h_d(n)$ is regularly varying of index less that $1/2$,
	\begin{align*}
	\Vert G(\calR^{(1)}_l,\calR^{(2)}_m) \Vert_4 \le \Vert G(\calR^{(1)}_n,\calR^{(2)}_n) \Vert_4 \le c_2 h_d(n)\leq c_3\sqrt{n}.
	\end{align*}
	Combining this with the elementary inequality $(a + b)^{1/4} \le a^{1/4} + b^{1/4}$, $a,b\ge0$, 
	we get
	\begin{equation*}
	\Vert\overline{\calC}_n\Vert_4 \le \left(\bbE\left[\left(\overline{\calC}^{(1)}_{l}\right)^4\right] + \bbE\left[\left(\overline{\calC}^{(2)}_{m}\right)^4\right]\right)^{1/4} + c_4 \sqrt{n}.
	\end{equation*}
	Similarly as in Lemma \ref{lm:existence_of_Varn/n_limit} we thus obtained
	\begin{equation*}
	\alpha_k \le 2^{1/4} \alpha_{k - 1} + c_4  2^{k/2}.
	\end{equation*}
	Setting $\beta_k = 2^{-k/2} \alpha_k$, $k\ge1$,  we deduce that
	\begin{equation*}
	\beta_k \le \frac{1}{2^{1/4}} \beta_{k - 1} + c_4,
	\end{equation*}
	 which shows that $\{\beta_k\}_{k\ge1}$ is a bounded sequence. Therefore, $\alpha_k \le c_5 2^{k/2}$, $k\ge1$,  which immediately yields the result.
\end{proof}

The proof of Theorem \ref{CLT} is based on the  dyadic capacity decomposition formula derived in \cite[Corollary 2.1]{Asselah_Zd} and Lindeberg-Feller central limit theorem, which we state  for reader's convenience.

\begin{lemma}[{\cite[Corollary 2.1]{Asselah_Zd}}]\label{dyadic} Let $L,n\ge1$ be such that $2^L\le n$.  Then, \begin{equation}\label{eq:dyadic}\sum_{i=1}^{2^L}\mathrm{Cap}\,(\calR^{(i)}_{n/2^L})-2\sum_{l=1}^L\sum_{i=1}^{2^{l-1}}\calE_l^{(i)}
		\le \calC_n\le \sum_{i=1}^{2^L}\mathrm{Cap}\,(\calR^{(i)}_{n/2^L}),\end{equation} where $\{\calR^{(i)}_{n/2^L}\}_{i=1,\dots,2^L}$ are independent and  $\calR^{(i)}_{n/2^L}$ has the same law as $\calR_{\floor{n/2^L}}$ or $\calR_{\floor{n/2^L+1}}$, and for each $l=1,\dots,2^L$ the random variables $\{\calE_l^{(i)}\}_{i=1,\dots,2^{l-1}}$
		are independent and $\calE_l^{(i)}$ has the same law as $G(\calR^{(i)}_{n/2^l},\bar\calR^{(i)}_{n/2^l})$ with  $\{\bar\calR_n\}_{n\ge0}$ being an independent copy of $\{\calR_n\}_{n\ge0}$.
\end{lemma}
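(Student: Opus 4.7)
The plan is to proceed by induction on $L$, with Lemma \ref{capacity_decomp} as the sole building block. The idea is to refine the dyadic decomposition one level at a time: at each step, every existing piece of the range is split in half in time, producing twice as many pieces and exactly one new Green-function error term per piece that is split. The statement is then obtained by summing the one-level inequalities produced along the way.

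For the base case $L=1$, I would write $\calR_n=\calR_{\floor{n/2}}\cup\calR[\floor{n/2},n]$ and apply Lemma \ref{capacity_decomp} directly (using nonnegativity of the capacity to drop the intersection term in the upper bound). By the strong Markov property at time $\floor{n/2}$ combined with spatial homogeneity of $\{S_n\}_{n\ge0}$, the shifted set $\calR[\floor{n/2},n]-S_{\floor{n/2}}$ has the same law as $\calR_{\ceil{n/2}}$ and is independent of $\calR_{\floor{n/2}}$. Since both the capacity and the pairing $G(\cdot,\cdot)$ are translation invariant (that is, $G(A,B)=G(A-x,B-x)$ for every $x\in\bbZ^d$), each piece can be replaced by an independent copy of the appropriate range process without altering either the capacity or the Green-function quantity. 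This gives the $L=1$ case and identifies $\calE_1^{(1)}$ with a copy of $G(\calR^{(1)}_{n/2},\bar{\calR}^{(1)}_{n/2})$ as required.

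For the inductive step, assume the bounds at level $L-1$. Apply the one-level splitting separately to each of the $2^{L-1}$ mutually independent pieces provided by the induction hypothesis; each such piece, distributed as $\calR_{\floor{n/2^{L-1}}}$ or $\calR_{\floor{n/2^{L-1}}+1}$, is decomposed by Lemma \ref{capacity_decomp} into two halves of length close to $n/2^L$. This yields $2^L$ new independent pieces at level $L$ (with the claimed laws) together with $2^{L-1}$ new Green-function error terms $\calE_L^{(i)}$, which are mutually independent across $i=1,\dots,2^{L-1}$ precisely because their parent pieces are. Substituting these one-level estimates into the level-$(L-1)$ bounds, and using nonnegativity of the new error terms for the upper bound, produces both sides of \eqref{eq:dyadic}.

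The argument is conceptually elementary once the two-set decomposition of Lemma \ref{capacity_decomp} is in hand, so the main obstacle is bookkeeping rather than analysis. The key subtle point is that the error terms $\calE_l^{(i)}$ are in general \emph{not} independent across different levels $l$, since deeper splittings involve sets nested inside those producing shallower errors; only the level-wise independence asserted in the statement holds, and one must be careful not to invoke more. A secondary, purely combinatorial point is that when $n$ is not a multiple of $2^L$, the $2^L$ terminal pieces have lengths either $\floor{n/2^L}$ or $\floor{n/2^L}+1$ depending on how the remainders distribute across the dyadic splits; this is the reason for the two possible laws appearing in the statement, and the induction has to propagate these two possibilities correctly from one level to the next.
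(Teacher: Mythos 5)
The paper does not prove this lemma at all: it is imported verbatim from \cite[Corollary 2.1]{Asselah_Zd}, with only the remark that the argument given there for the simple random walk carries over. Your induction on $L$ is precisely the argument behind that cited result --- split each level-$(L-1)$ piece in two via Lemma \ref{capacity_decomp}, substitute into the level-$(L-1)$ bounds, collect one new error term per split --- and the bookkeeping (level-wise but not cross-level independence of the $\calE_l^{(i)}$, the $\floor{n/2^L}$ versus $\floor{n/2^L}+1$ lengths) is handled correctly. The one step you should make explicit is the identification of $G\bigl(\calR_{\floor{n/2}},\calR[\floor{n/2},n]\bigr)$ with $G$ of two \emph{independent} copies: translation invariance of $G$ only permits shifting both arguments by the same vector, so after recentering at $S_{\floor{n/2}}$ the second set becomes an independent copy of $\calR_{\ceil{n/2}}$, but the first becomes the \emph{time-reversed} range $\{S_k-S_{\floor{n/2}}: k\le \floor{n/2}\}$; that this has the law of $\calR_{\floor{n/2}}$ uses reversal of increments together with the symmetry assumption (\textbf{A2}), not merely the Markov property and spatial homogeneity you invoke. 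With that point added, your proof is complete and matches the intended (cited) argument.
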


\begin{lemma}[{\cite[Theorem 4.5]{Durrett}}]\label{feller}
	For each $n\ge1$ let $\{X_{n,m}\}_{1\le m\le n}$ be a sequence of independent random variables. If
	\begin{itemize}
		\item [(i)] $\sum_{m=1}^n\Var(X_{n,m})\xrightarrow[]{n\nearrow\infty}\sigma^2>0;$
		
		\medskip
		
		\item [(ii)] for every $\varepsilon>0$, $\sum_{m=1}^n\bbE\left[(X_{n,m}-\bbE[X_{n,m}])^2\bbjedan_{\{|X_{n,m}-\bbE[X_{n,m}|>\varepsilon\}}\right]\xrightarrow[]{n\nearrow\infty}0,$
	\end{itemize}
	then $X_{n,1}+\cdots+ X_{n,n}\xrightarrow[n\nearrow\infty]{\rm{(d)}}\sigma\,\calN(0,1).$
	\end{lemma}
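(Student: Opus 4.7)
The plan is to prove this via characteristic functions and Lévy's continuity theorem, following the classical Lindeberg argument. Without loss of generality I assume $\bbE[X_{n,m}]=0$ for all $n,m$, since replacing each $X_{n,m}$ by its centered version affects neither the hypotheses nor the conclusion. Write $\sigma_{n,m}^2=\Var(X_{n,m})$ and $\varphi_{n,m}(t)=\bbE[e^{itX_{n,m}}]$. By independence, the characteristic function of $S_n:=\sum_{m=1}^n X_{n,m}$ factorizes as $\prod_{m=1}^n\varphi_{n,m}(t)$, and by Lévy's continuity theorem it suffices to show that for every fixed $t\in\bbR$,
\[
\prod_{m=1}^n\varphi_{n,m}(t)\;\xrightarrow[n\nearrow\infty]{}\;\exp(-\sigma^2t^2/2).
\]

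A first observation is that hypothesis (ii), combined with the elementary inequality $\sigma_{n,m}^2\le\varepsilon^2+\bbE[X_{n,m}^2\bbjedan_{\{|X_{n,m}|>\varepsilon\}}]$ and hypothesis (i), forces $\max_{1\le m\le n}\sigma_{n,m}^2\to 0$. The core analytic input is the pointwise bound
\[
\bigl|e^{ix}-1-ix+x^2/2\bigr|\le\min\bigl(|x|^3/6,\,x^2\bigr),\qquad x\in\bbR.
\]
Applied with $x=tX_{n,m}$ and split according to the event $\{|X_{n,m}|\le\varepsilon\}$ and its complement (using the cubic bound in the first regime, the quadratic one in the second), this yields, thanks to $\bbE[X_{n,m}]=0$,
\[
\Bigl|\varphi_{n,m}(t)-\bigl(1-\tfrac{t^2}{2}\sigma_{n,m}^2\bigr)\Bigr|\le \frac{|t|^3\varepsilon}{6}\sigma_{n,m}^2+t^2\,\bbE\bigl[X_{n,m}^2\bbjedan_{\{|X_{n,m}|>\varepsilon\}}\bigr].
\]
Summing in $m$ and invoking (i) and (ii) shows that $\sum_{m=1}^n|\varphi_{n,m}(t)-(1-\tfrac{t^2}{2}\sigma_{n,m}^2)|$ is at most $|t|^3\varepsilon\sigma^2/6+o(1)$, and the arbitrariness of $\varepsilon>0$ makes this sum tend to $0$.

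To pass from sums to products, I would use the standard complex-analytic estimate $|\prod_{m=1}^n a_m-\prod_{m=1}^n b_m|\le\sum_{m=1}^n|a_m-b_m|$, valid whenever $|a_m|,|b_m|\le 1$; this applies here for large $n$ since $|\varphi_{n,m}(t)|\le 1$ and $|1-\tfrac{t^2}{2}\sigma_{n,m}^2|\le 1$ once $\max_m\sigma_{n,m}^2$ is sufficiently small. It then remains to show that $\prod_{m=1}^n(1-\tfrac{t^2}{2}\sigma_{n,m}^2)\to e^{-\sigma^2t^2/2}$. Taking logarithms and using $\log(1-u)=-u+O(u^2)$ uniformly for $u$ in a neighbourhood of $0$, together with $\sum_m\sigma_{n,m}^2\to\sigma^2$, the $O$-remainder is bounded by a constant (depending on $t$) times $\max_m\sigma_{n,m}^2\cdot\sum_m\sigma_{n,m}^2=o(1)$, delivering the desired exponential limit and hence the conclusion.

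The main obstacle is the $\varepsilon$-splitting in the Taylor-type estimate: one must calibrate $\varepsilon$ so that the cubic contribution, controlled uniformly by the total variance $\sigma^2$ through the prefactor $\varepsilon$, can be driven arbitrarily small, while the quadratic contribution is absorbed by the Lindeberg hypothesis (ii). Once this splitting is in place, the remaining steps reduce to routine estimates powered by the uniform smallness $\max_m\sigma_{n,m}^2\to 0$, which itself follows from (i) and (ii); the non-degeneracy assumption $\sigma^2>0$ is used only at the end to identify the limit as $\sigma\calN(0,1)$ rather than a point mass at $0$.
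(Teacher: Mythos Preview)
The paper does not prove this lemma: it is quoted verbatim from Durrett's textbook and stated only ``for reader's convenience'', with no argument supplied. Your proposal is the classical characteristic-function proof of the Lindeberg--Feller theorem, and it is correct as written; the $\varepsilon$-splitting of the Taylor remainder, the derivation of $\max_m\sigma_{n,m}^2\to 0$ from (i) and (ii), the telescoping product bound, and the logarithmic expansion are all standard and handled cleanly. One small quibble: your ``without loss of generality'' reduction to the centered case is not quite innocuous for the conclusion as stated (centering shifts the sum by $\sum_m\bbE[X_{n,m}]$), but the lemma as recorded in the paper implicitly requires centered summands anyway---indeed Durrett's Theorem 3.4.5 assumes $\bbE[X_{n,m}]=0$, and the paper only applies the lemma to the already-centered variables $\overline\calC^{(i)}_{n/2^L}/\sqrt{n}$---so no harm is done.
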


Finally, we prove  Theorem \ref{CLT}.

\begin{proof}[Proof of  Theorem \ref{CLT}]
Denote $\calC^{(i)}_{n/2^L}= \mathrm{Cap}\,(\calR^{(i)}_{n/2^L})$, $i=1,\dots,2^L$.
	By taking expectation in \eqref{eq:dyadic} and then subtracting those two relations we obtain	
	\begin{equation}\label{eq:relation_from_Cor_2.1-with_bars}
	\sum_{i = 1}^{2^L} \overline\calC^{(i)}_{n/2^L} - 2 \sum_{l = 1}^L \sum_{i = 1}^{2^{l - 1}} \calE_l^{(i)} \le \overline{\calC}_n \le \sum_{i = 1}^{2^L} \overline\calC^{(i)}_{n/2^L} + 2 \sum_{l = 1}^L \sum_{i = 1}^{2^{l - 1}} \bbE[\calE_l^{(i)}].
	\end{equation}
	Further, define
	\begin{equation*}
	\calE(n) = \sum_{i = 1}^{2^L} \overline\calC^{(i)}_{n/2^L} - \overline{\calC}_n, \qquad n\ge1.
	\end{equation*}
	Using \eqref{eq:relation_from_Cor_2.1-with_bars} and Lemma \ref{lm:E_of_sum_sum_G^k}, we get that
	\begin{equation*}
	\bbE\big[ \aps{\calE(n)} \big] \le 4 \bbE\ugl{\sum_{l = 1}^L \sum_{i = 1}^{2^{l - 1}} \calE_l^{(i)}} \le c_1\sum_{l = 1}^L \sum_{i = 1}^{2^{l - 1}} h_d \obl{\frac{n}{2^l}} \le c_2  h_d(n) \sum_{l = 1}^L 2^{l-1} \le c_22^{L} h_d(n).
	\end{equation*} 
Next we distinguish between two cases. If $5 /2<d/\alpha<3$ then we set $L = \lfloor\log_2\big(n^{\Delta /2} \big)\rfloor$, where $\Delta = d/\alpha - 5/2$. This implies 
	\begin{align}\label{conv123}
	\lim_{n\nearrow \infty} \frac{\bbE\big[\aps{\calE(n)}\big] }{\sqrt{n} } =0.
	\end{align}	
	If $d/\alpha\geq 3$ then $h_d(n) $ is slowly varying and in this case it is enough to choose $L =\lfloor\log_2\big(n^{1/4} \big)\rfloor $ to obtain \eqref{conv123}.
	We are thus left to prove that
	\begin{equation*}
	\sum_{i = 1}^{2^L} \frac{\overline\calC^{(i)}_{n/2^L}}{\sqrt{n}}\xrightarrow[n\nearrow\infty]{\text{(d)}}  \sigma_d\, \calN(0, 1),
	\end{equation*} 
	where $\sigma_d>0$ is from Lemma \ref{lm:existence_of_Varn/n_limit}.
	To establish this result we apply the Lindeberg-Feller central limit theorem.
	By Lemma \ref{lm:existence_of_Varn/n_limit} we have
	\begin{equation*}
	\lim_{n\nearrow \infty }\sum_{i = 1}^{2^L} \frac{1}{n} \Var(\overline\calC^{(i)}_{n/2^L}) = \sigma_d^2,
	\end{equation*}
	which means that the first Lindeberg-Feller condition is satisfied. 
	It remains to check that for any $\varepsilon >0$ it holds that
	\begin{equation*}
	\lim_{n \nearrow \infty} \sum_{i = 1}^{2^L} \frac{1}{n} \bbE\left[ \left(\overline\calC^{(i)}_{n/2^L}\right)^2 \bbjedan_{\{ \aps{\overline\calC^{(i)}_{n/2^L}} > \varepsilon \sqrt{n} \}}\right] = 0.
	\end{equation*}
	Observe that by the Cauchy-Schwartz inequality we have 
	\begin{align*}
	\bbE\left[ \left(\overline\calC^{(i)}_{n/2^L}\right)^2 \bbjedan_{\{ \aps{\overline\calC^{(i)}_{n/2^L}} > \varepsilon \sqrt{n} \}}\right]
	&\le \left(\bbE\left[\left(\overline\calC^{(i)}_{n/2^L}\right)^4\right] \bbP(\aps{\overline\calC^{(i)}_{n/2^L}} > \varepsilon \sqrt{n})\right)^{1/2}.
	\end{align*}
Further, the Chebyshev inequality combined with Lemma \ref{lm:existence_of_Varn/n_limit}, strict positivity of $\sigma_d$ and Lemma \ref{lm:norm_4_of_Cn}, imply
	\begin{align*}
\bbE\left[\left(\overline\calC^{(i)}_{n/2^L}\right)^4\right] \bbP(\aps{\overline\calC^{(i)}_{n/2^L}} > \varepsilon \sqrt{n})
	 \le c_3\obl{\frac{n}{2^L}}^2 \, \frac{\Var\left(\overline\calC^{(i)}_{n/2^L}\right)}{\varepsilon^2 n} 
	 \le c_4 \frac{n^2}{\varepsilon^2 2^{3L}} .
	\end{align*}
	Based on the choice $L = \lfloor\log_2\big(n^{\Delta /2} \big)\rfloor$ if $5 /2<d/\alpha<3$ (and $L =\lfloor\log_2\big(n^{1/4} \big)\rfloor $ if $d/\alpha\geq 3$),  we conclude that in both cases 
	\begin{equation*}
	\sum_{i = 1}^{2^L} \frac{1}{n} \bbE\left[ \left(\overline\calC^{(i)}_{n/2^L}\right)^2 \bbjedan_{\{ \aps{\overline\calC^{(i)}_{n/2^L}} > \varepsilon \sqrt{n} \}}\right] \le \frac{c_5}{\varepsilon 2^{L/2}} \xrightarrow[]{n\nearrow\infty} 0,
	\end{equation*}
and this finishes the proof.
\end{proof}

\section*{Acknowledgement} 
\noindent
We wish to thank R.\ L.\ Schilling for stimulating discussions.
This work has been supported by \textit{Deutscher Akademischer} \textit{Austauschdienst} (DAAD) and \textit{Ministry of Science and Education of the Republic of Croatia} (MSE) via project \textit{Random Time-Change and Jump Processes}.  Financial support through the \textit{Croatian Science Foundation} under projects 8958 and  4197 (for N. Sandri\'c),  and the \textit{Austrian Science Fund} (FWF) under project P31889-N35 and \textit{Croatian Science Foundation} under project 4197 (for S.\ \v Sebek) is gratefully acknowledged.
We also thank the anonymous referees for helpful comments that have led to improvements of the presentation of  the article.

\bibliographystyle{babamspl}
\bibliography{Capacity_of_the_range_of_stable_random_walks}

\end{document}